\documentclass[11pt,reqno]{amsart}
\usepackage[a4paper,left=30mm,right=30mm,top=32mm,bottom=32mm]{geometry}
\fontsize{6.5pt}{6.5pt}\selectfont 
\makeatletter
\def\@evenhead{{\fontsize{6.5pt}{6.5pt}\selectfont \hfil \leftmark\hfil\thepage}}
\def\@oddhead{{\fontsize{6.5pt}{6.5pt}\selectfont\hfil\rightmark \hfil\thepage}}
\usepackage{amsmath,amssymb,amsthm,mathtools,array}
\usepackage{extarrows}
\usepackage[dvipsnames]{xcolor}
\usepackage{graphicx}
\usepackage{booktabs,tabularx}
\usepackage{enumitem}
\setlist[enumerate]{font=\upshape}
\usepackage{tikz}
\usetikzlibrary{arrows.meta,matrix,decorations.markings}
\tikzset{
  ->-/.style={decoration={markings,mark=at position #1 with {\arrow{>}}},postaction={decorate}},
  -<-/.style={decoration={markings,mark=at position #1 with {\arrow{<}}},postaction={decorate}}
}
\usepackage{microtype}
\usepackage{needspace}
\usepackage[colorlinks=true,linkcolor=blue,citecolor=red,urlcolor=blue]{hyperref}
\usepackage{bookmark}
\usepackage{cleveref}
\numberwithin{equation}{section}
\newtheorem{theorem}{Theorem}[section]
\newtheorem{proposition}[theorem]{Proposition}
\newtheorem{lemma}[theorem]{Lemma}

\newtheorem{conjecture}[theorem]{Conjecture}
\theoremstyle{definition}
\newtheorem{definition}[theorem]{Definition}
\newtheorem{example}[theorem]{Example}
\newtheorem{remark}[theorem]{Remark}
\newcommand{\mcA}{\mathcal{A}}

\newcommand{\mcL}{\mathcal{L}}

\newcommand{\mcT}{\mathcal{T}}

\newcommand{\mfa}{\mathbf{a}}
\newcommand{\mfb}{\mathbf{b}}

\newcommand{\mfe}{\mathbf{e}}

\newcommand{\mfk}{\mathbf{k}}

\newcommand{\mfv}{\mathbf{v}}

\newcommand{\mfx}{\mathbf{x}}

\newcommand{\mfz}{\mathbf{z}}
\newcommand{\mfalpha}{\boldsymbol{\alpha}}

\newcommand{\mfeta}{\boldsymbol{\eta}}
\newcommand{\NN}{\mathbb Z_{\geq0}}
\newcommand{\ZZ}{\mathbb Z}
\newcommand{\QQ}{\mathbb Q}
\newcommand{\RR}{\mathbb R}
\newcommand{\supp}{\operatorname{supp}}
\newcommand{\cL}{\mcL}
\newcommand{\cA}{\mcA}
\newcommand{\cT}{\mcT}
\newcommand{\eps}{\varepsilon}
\newcommand{\wt}{\operatorname{wt}}
\title[Log-concavity and unimodality of cluster monomials in fan coordinates of type $A_n$]{Log-concavity and unimodality of cluster monomials\\ in fan coordinates of type $A_n$}
\author{Zhichao Chen}
\address{School of Mathematical Sciences\\ University of Science and Technology of China\\ Hefei, Anhui 230026, P. R. China}
\email{czc98@mail.ustc.edu.cn}
\author{Zhe Sun}
\address{School of Mathematical Sciences\\ University of Science and Technology of China\\ Hefei, Anhui 230026, P. R. China}
\email{sunz@ustc.edu.cn}
\date{\today}
\subjclass[2020]{Primary 13F60. Secondary 05A20, 05E10}
\keywords{Cluster monomial, log-concavity, unimodality, fan triangulation, laminar family.}
\hypersetup{pdftitle={Log-concavity and unimodality of cluster monomials in fan coordinates of type An},pdfauthor={Zhichao Chen and Zhe Sun}}
\begin{document}
\begin{abstract}
We prove that every cluster monomial of type $A_n$ is log-concave and unimodal with respect to its Laurent expansion in a fan initial cluster. We derive a coefficient formula for laminar interval products arising from compatible diagonals in the polygon model. In even rank, we reduce log-concavity in fan coordinates to log-concavity along integer lines. In odd rank, we use convolution to handle the coefficient sums arising from the fan substitution. This gives partial affirmative answers to the log-concavity conjecture of Chen-Huang-Sun \cite{CHS26} and the unimodality conjecture of Chen \cite{Che26}. Our result also complements Shen's study \cite{She14} on the supports of products of canonical basis elements for cluster $\mathcal X$-varieties of type $A_n$.
\end{abstract}
\maketitle
{\linespread{1}\selectfont\tableofcontents}

\section{Introduction}
\subsection{Motivation}
Cluster algebras were introduced by Fomin-Zelevinsky in the study of total positivity and canonical bases \cite{FZ02}. Their generators are organized into clusters, which are related by mutations. Although mutations are rational transformations, the Laurent phenomenon guarantees that every cluster variable is a Laurent polynomial in any fixed initial cluster. Thus, it is natural to study the coefficient structure of these Laurent expansions.

Log-concavity is a central property of sequences in algebraic combinatorics, such as \cite{Bre89,Sta89,HMMS22}. Okounkov \cite{Oko03} studied log-concavity questions for multiplicities in representation theory. This provides a broader motivation for asking how positive coefficients vary with their indices. Here, we study the Laurent coefficients of cluster monomials, especially for Dynkin type $A_n$.

The linear independence of cluster monomials was conjectured in \cite{FZ04} and established using categorification and scattering diagrams \cite{IKFP13,GHKK18}. The Laurent phenomenon \cite{FZ02} and positivity for cluster algebras \cite{LS15,GHKK18} ensure that cluster monomials have nonnegative Laurent coefficients in every initial cluster. Then, Chen-Huang-Sun \cite{CHS26} proved log-concavity for cluster variables of type $A_n$ and cluster monomials of type $A_2$. Chen \cite{Che26} subsequently established both log-concavity and unimodality for cluster monomials of type $A_3$. These results lead to the question of whether both properties hold in higher rank with respect to an arbitrary initial cluster.

The cluster ensemble framework of Fock-Goncharov \cite{FG09} places these questions in a geometric setting. A cluster ensemble consists of a pair of positive spaces, the cluster $\mathcal A$- and $\mathcal X$-varieties, related by a morphism $p:\mathcal A\to\mathcal X$. Cluster variables are coordinates on the cluster $\mathcal A$-variety, while the cluster $\mathcal X$-variety carries a compatible Poisson structure. Their duality conjectures relate canonical bases of functions on one space to integral tropical points of the Langlands dual space of the other kind. In this language, our cluster monomials are universally positive regular functions on the coefficient-free $\mathcal A$-variety, and we study their Laurent coefficients in the chart associated with a fixed fan triangulation; see \Cref{def: fan}.

For surfaces, Fock-Goncharov \cite{FG06} constructed positive coordinate systems on moduli spaces of local systems. Ideal triangulations provide coordinates on the $\mathcal A$-space associated with $\mathrm{SL}_2$ and the $\mathcal X$-space associated with $\mathrm{PGL}_2$. A disk with $n+3$ marked boundary points gives the polygon combinatorics of type $A_n$. Specializing the boundary $\mathcal A$-coordinates to $1$ gives the coefficient-free convention used here. Goncharov-Shen \cite{GS15} further related positive potentials on decorated local-system spaces to canonical bases: for a disk, integral tropical points satisfying the potential inequalities parametrize canonical bases in tensor product invariants for the Langlands dual group. These constructions connect positivity and tropical geometry with the noncrossing combinatorics used below. Related geometric applications include the study of exponential volumes of moduli spaces of hyperbolic surfaces by Goncharov-Sun \cite{GS26}.

For cluster $\mathcal X$-varieties of type $A_n$, Shen \cite{She14} proved that the supports of products of canonical basis elements are Stasheff polytopes in the tropical $\mathcal A$-space. This gives a geometric description of which basis elements occur in a product. Our result addresses a related coefficient question for the cluster $\mathcal A$-variety. In a fan chart, the Laurent coefficients of every cluster monomial are log-concave and unimodal along each coordinate direction. We explain the common polygon combinatorics and the distinction between these two types of expansion in Remark~\ref{rem:shen}.

In this paper, we assume that the initial triangulation is a fan, whose quiver is a linearly oriented path of type $A_n$ up to relabeling and reversal of all arrows. 
The main difficulty is that coordinatewise log-concavity need not be preserved under multiplication of multivariate polynomials \cite[Remark~3.9]{Che26}. Hence, the result for individual cluster variables does not imply the corresponding statement for cluster monomials. However, variables in a common cluster correspond to compatible diagonals. Their fan expansions involve intervals that are nested or disjoint, so we can use this laminar structure to count the coefficients of their product.

\subsection{Main result}
Throughout the paper, we work with skew-symmetric and coefficient-free cluster algebras. Unless otherwise specified, log-concavity of a Laurent polynomial refers to its coefficients along each coordinate direction, with all other exponents fixed. Unimodality is understood in the same way and precise definitions are given in Section~\ref{sec:sequences}.

Chen-Huang-Sun conjectured that cluster monomials of type $A_n$ are log-concave \cite[Conjecture~5.5]{CHS26}. Afterwards, Chen \cite{Che26} extended this question to unimodality and independence of the initial seed. We state the two questions below.
\begin{conjecture}[{\cite[Conjecture~5.5]{CHS26}, \cite[Conjecture~5.11]{Che26}}]\label{conj:cluster-monomials}
Let $n\geq4$.
\begin{enumerate}
\item Cluster monomials of type $A_n$ are log-concave and unimodal.
\item For each cluster monomial of type $A_n$, these properties are independent of the choice of the initial seed.
\end{enumerate}
\end{conjecture}

We call a family of nonempty sets \emph{laminar} if any two members are disjoint or one contains the other; see Definition~\ref{def:laminar}. A \emph{laminar interval family} is a laminar family whose members are integer intervals. The following proposition expresses cluster monomials in terms of laminar interval products.
\begin{proposition}[{Proposition~\ref{prop:reduction}}]\label{prop:intro-reduction}
Let $\mfx=(x_1,\ldots,x_n)$ be the fan initial cluster of type $A_n$. For every cluster monomial $M$, there exist $\mfeta\in\NN^n$, a laminar interval family $\cL$ in $\{1,\ldots,n+1\}$, and nonnegative integers $a_I$ such that
\[
 M=\mfx^{\mfeta}\Phi(P),\quad
 P(\mfz)=\prod_{I\in\cL}\left(\sum_{i\in I}z_i\right)^{a_I}.
\]
Here, the substitution map
$
 \Phi(z_i)=x_{i-1}^{-1}x_i^{-1},
$ where $1\leq i\leq n+1$ and $x_0=x_{n+1}=1$.
\end{proposition}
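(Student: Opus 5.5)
We would prove this by working in the polygon model directly. Label the vertices of the $(n+3)$-gon $0,1,\ldots,n+2$, and take the fan triangulation at vertex $0$ as in Definition~\ref{def: fan}, so that $x_i$ is the diagonal $(0,i+1)$ for $1\le i\le n$. With this labelling $x_{0,p}=x_{p-1}$ for $1\le p\le n+2$, and the conventions $x_0=x_{n+1}=1$ account for the boundary edges $(0,1)$ and $(0,n+2)$. Every cluster monomial is a product of powers of pairwise compatible, i.e.\ noncrossing, diagonals. Diagonals through vertex $0$ are themselves initial variables, and we put them into $\mfx^{\mfeta}$. It therefore suffices to handle diagonals $(p,q)$ with $1\le p<q\le n+2$ and $q-p\ge2$.

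\textbf{Step 1: formula for a single diagonal.} For $1\le p<q\le n+2$ we claim
\[
 x_{p,q}=x_{p-1}x_{q-1}\sum_{k=p}^{q-1}\frac{1}{x_{k-1}x_k}=x_{p-1}x_{q-1}\,\Phi\Bigl(\sum_{k\in[p,q-1]}z_k\Bigr).
\]
We prove this by induction on $q-p$.

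In the base case $q=p+1$, the edge $(p,p+1)$ is a boundary edge with value $1$, and the right-hand side is $x_{p-1}x_p\cdot\frac{1}{x_{p-1}x_p}=1$.

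For the inductive step, apply the Ptolemy relation in the quadrilateral $(0,p,q-1,q)$:
\[
 x_{p,q}\,x_{0,q-1}=x_{p,q-1}\,x_{0,q}+x_{0,p}\,x_{q-1,q}.
\]
Substituting $x_{0,q-1}=x_{q-2}$, $x_{0,q}=x_{q-1}$, $x_{0,p}=x_{p-1}$ and $x_{q-1,q}=1$ gives $x_{p,q}=(x_{p,q-1}x_{q-1}+x_{p-1})/x_{q-2}$. Inserting the inductive formula $x_{p,q-1}=x_{p-1}x_{q-2}\sum_{k=p}^{q-2}\frac{1}{x_{k-1}x_k}$ produces exactly one new summand, namely $x_{p-1}x_{q-1}\cdot\frac{1}{x_{q-2}x_{q-1}}$. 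This step is the computational heart of the argument. The main care needed is checking that the boundary conventions $x_0=x_{n+1}=1$ are consistent at $p=1$ and at $q=n+2$, which they are by the identification $x_{0,p}=x_{p-1}$.

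\textbf{Step 2: laminarity.} To the diagonal $(p,q)$ we associate the interval $I=[p,q-1]\subseteq\{1,\ldots,n+1\}$. Two diagonals $(p,q)$ and $(p',q')$ with $p\le p'$ cross exactly when $p<p'<q<q'$. In that case, and only then, the intervals $[p,q-1]$ and $[p',q'-1]$ overlap without being nested.

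Consequently, noncrossing diagonals give either nested intervals (when $q'\le q$) or disjoint intervals (when $q\le p'$). The collection of distinct intervals $\cL$ arising from the diagonals of a cluster is therefore laminar, in the sense of Definition~\ref{def:laminar}.

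\textbf{Step 3: assembling the monomial.} Let $M=\prod_{\text{fan}}x_i^{c_i}\prod_{(p,q)}x_{p,q}^{a_{p,q}}$. Since $\Phi$ is multiplicative, Step~1 gives $M=\mfx^{\mfeta}\Phi(P)$ with
\[
 \mfeta=\sum_i c_i\mfe_i+\sum_{(p,q)}a_{p,q}(\mfe_{p-1}+\mfe_{q-1}),
\]
where we drop the components $\mfe_0$ and $\mfe_{n+1}$ because $x_0=x_{n+1}=1$. Here $P=\prod_{I\in\cL}(\sum_{i\in I}z_i)^{a_I}$ and $a_I=a_{p,q}$ for $I=[p,q-1]$. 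The vector $\mfeta$ lies in $\NN^n$ because all exponents involved are nonnegative.

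We expect no serious obstacle beyond the index bookkeeping in Step~1. The only conceptual point is the translation from noncrossing diagonals to laminar intervals in Step~2.
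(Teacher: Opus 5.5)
Your proposal is correct and follows the paper's proof essentially step for step. Your single-diagonal formula is Lemma~\ref{lem:fanexp}, proved by the same induction on $q-p$ with the Ptolemy relation on $0,p,q-1,q$; your criterion $p<p'<q<q'$ for crossing versus non-nested overlap of $[p,q-1]$ and $[p',q'-1]$ is Lemma~\ref{lem:laminararcs}; and your $\mfeta$ and $P$ coincide with those in the proof of Proposition~\ref{prop:reduction} (your parenthetical ``nested when $q'\le q$'' omits the case $p=p'$, $q'>q$, but your preceding if-and-only-if statement already covers it).
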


When the initial triangulation is a fan, we obtain the following result for all cluster monomials in arbitrary rank with the help of \Cref{prop:intro-reduction}.
\begin{theorem}\label{thm:main}
Let $\cA$ be a cluster algebra whose initial exchange matrix corresponds to a linearly oriented quiver of type $A_n$. Then, every cluster monomial of $\cA$ is log-concave and unimodal with respect to its initial cluster $\mfx=(x_1,\ldots,x_n)$. Moreover, every nonzero coordinate slice of its Laurent coefficient array has no internal zeros.
\end{theorem}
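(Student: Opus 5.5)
By \Cref{prop:intro-reduction}, any cluster monomial can be written as $M=\mfx^{\mfeta}\Phi(P)$ with $P=\prod_{I\in\cL}(\sum_{i\in I}z_i)^{a_I}$ for a laminar interval family $\cL$. The monomial prefactor $\mfx^{\mfeta}$ only translates exponents, so it suffices to treat $\Phi(P)$.

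\textbf{Step 1: a coefficient formula for $P$.} I would first describe the coefficients of $P$. For each $I$, expand $(\sum_{i\in I}z_i)^{a_I}$ by the multinomial theorem. Because $\cL$ is laminar, the total mass assigned to any block can be computed recursively along the inclusion forest of $\cL$. The coefficient of $\mfz^{\mfk}$ then becomes a sum over ``flows'' down this tree. Each flow is weighted by products of multinomial coefficients, or equivalently by binomials of the form $\binom{\text{mass entering }J}{\text{mass sent to children}}$.

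\textbf{Step 2: computing $\Phi$.} Write $\Phi(\mfz^{\mfk})=\prod_{i=1}^n x_i^{-(k_i+k_{i+1})}$. The exponent of $x_i$ is therefore $-(k_i+k_{i+1})$, and the coefficient of $\mfx^{-\mfb}$ in $\Phi(P)$ is $\sum\{[\mfz^{\mfk}]P : k_i+k_{i+1}=b_i\ \forall i\}$. The total degree $d=\sum_I a_I|{\cdot}|$ of $P$ is fixed, since $P$ is homogeneous.
\begin{itemize}
\item When $n+1$ is even, the linear system for $\mfk$ has a unique solution: $\sum_i k_i = d$ together with the alternating constraints determines $\mfk$. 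Varying one $b_i$ with the other $b_j$ fixed then moves $\mfk$ along the integer line $\mfk+t(\mfe_{\text{odd}}-\mfe_{\text{even}})$ pattern, which changes an alternating set of coordinates by $\pm t$. Log-concavity in $x_i$ thus reduces to log-concavity of $t\mapsto[\mfz^{\mfk+t\mfv}]P$ along that line.
\item When $n+1$ is odd, there is a one-parameter fiber $\mfk+s\mfv$ with $\mfv=(1,-1,1,\dots,1)$. The coefficient is then $c(\mfb)=\sum_s p(\mfk(\mfb)+s\mfv)$, and varying $b_i$ shifts the fiber along a second direction.
\end{itemize}

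\textbf{Step 3: log-concavity of $P$ along these lines.} The key claim is that $P$ is log-concave along every integer line of the form $\mfk+t\mfv$ with $\mfv\in\{0,\pm1\}^{n+1}$ of the relevant alternating type. More strongly, I would aim to show that the two-variable restriction is jointly log-concave in the sense of having an M-concave / Lorentzian structure. Products of powers of linear forms with nonnegative coefficients are Lorentzian in the sense of Brändén--Huh, since each factor is a power of a linear form and products of Lorentzian polynomials are Lorentzian. The normalized coefficients $p(\mfk)/\mfk!$ of a Lorentzian polynomial are log-concave along directions $\mfe_i-\mfe_j$. Here, however, we need the unnormalized coefficients along longer directions. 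Laminarity should let the recursion in Step 1 express the restriction to a line as an iterated convolution of binomial-type sequences. Convolutions of log-concave sequences with no internal zeros are log-concave with no internal zeros, and this gives both properties simultaneously.

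For the odd case, the sum over $s$ is a marginal of a two-dimensional array. I would show that this array, indexed by $(s,t)$, is a convolution of log-concave two-variable arrays supported on lattice intervals, so that its marginals in $t$ are log-concave.

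Unimodality and the absence of internal zeros follow from log-concavity once we know the supports are integer intervals. The support of $P$ is a generalized permutohedron, namely a sum of simplices, so its slices are intervals.

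\textbf{Main obstacle.} I expect the hardest part to be Step 3: proving log-concavity along the alternating directions, which are not root directions $\mfe_i-\mfe_j$. In the odd-rank case this must also survive the marginal sum. My first attempt would be induction on the laminar forest, peeling off a maximal interval. The goal would be to show that multiplying by $(\sum_{i\in I}z_i)^{a}$ acts on the relevant one-dimensional slices as convolution with a binomial sequence.
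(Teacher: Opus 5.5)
Your reduction and the split into a ``unique solution'' case and a ``one-parameter fiber'' case are the right skeleton. But Step~3 carries all of the real content, and you leave it as a hope; the tools you name do not reach it. Lorentzian theory gives log-concavity only along root directions $\mfe_i-\mfe_j$. The slice directions here are alternating vectors supported on a prefix or suffix of $[n+1]$. A sum-over-flows formula gives no visible reason for log-concavity along such lines, and your ``iterated convolution'' is never exhibited. The missing idea is that for a laminar family the flow sums collapse to a closed product, either by iterated Vandermonde or by the labeled-copies count of Proposition~\ref{prop:coeff}. For every $\mfk$ of total degree $d$,
\[
 [\mfz^{\mfk}]P=\prod_{i}V(k_i)\prod_{I\in\cL}W_{a_I}\bigl(h_I(\mfk)\bigr),\qquad h_I(\mfk)=\wt_I(\mfk)-D_I,
\]
where $V(r)=1/r!$ and $W_a(r)=(r+a)!/r!$ for $r\ge0$, and both vanish for $r<0$. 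Along any integer line, every factor is $V$ or $W_a$ of an affine function of the parameter, hence log-concave with interval support. The pointwise product therefore has the same properties (Proposition~\ref{prop:lines}). This settles the unique-solution case in every direction, with no convolution or Lorentzian input.

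In the one-parameter case, your plan to realize the summand as a 2D array whose marginals are log-concave is not justified. Log-concavity of a 2D array along coordinate directions does not pass to marginals: the indicator of $\{(0,0),(2,1)\}$ has marginal $1,0,1$. You also have not identified arrays whose convolution produces the summand.

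What is needed is an exact factorization of the summand. Write the fiber as $\mfk(q,t)$ with $u=t+q$, where $j$ is the slice direction. The coordinates $k_i$ with $i\le j$ depend only on $t$, and those with $i>j$ depend only on $u$. Because each member of $\cL$ is an interval and $j$ is even, the alternating signs cancel. Hence each $h_I$ is constant or depends on exactly one of $t$, $u$, $q$ (table~\eqref{eq:paritytable}). Combined with the product formula, this gives $c(\mfk(q,t))=A(t)B(t+q)H(q)$, hence $C_q=H(q)(\check A*B)_q$. Proposition~\ref{prop:convolution} and closure under pointwise products then yield log-concavity and interval support.

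Interval support of these slices does not follow from the generalized-permutohedron support of $P$, since projections of lattice points of a polygon can have gaps. You must also treat odd $j$ separately: there the fiber degree $b_1+b_3+\cdots+b_n$ moves with $q$, so the slice has at most one nonzero term.

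Finally, your parity labels are reversed. $\ker L$ is spanned by $(1,-1,1,\ldots)$, which has weight $1$ when $n+1$ is odd and $0$ when $n+1$ is even. So the degree-$d$ fiber is a single point for $n$ even and a line for $n$ odd. Also, $d=\sum_I a_I$, not $\sum_I a_I|\cdot|$.
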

Thus, Theorem~\ref{thm:main} proves the conjecture in \cite[Conjecture~5.5]{CHS26} and part~(1) of Conjecture~\ref{conj:cluster-monomials} for fan initial triangulations. We do not establish the seed independence asserted in part~(2). The distinction between changing the initial seed and transporting an expansion under an isomorphism is explained in Section~\ref{sec:scope}.

We prove the main theorem in Section~\ref{sec:mainproof} using Proposition~\ref{prop:intro-reduction}. We apply the coefficient formula in Proposition~\ref{prop:coeff} to the homogeneous polynomial $P$ and analyze how its coefficients combine under $\Phi$. This substitution sends $\mfz^{\mfk}$ to $\mfx^{-L(\mfk)}$, where
\[
 L(k_1,\ldots,k_{n+1})=(k_1+k_2,\ldots,k_n+k_{n+1}).
\]
Up to the fixed exponent shift $\mfeta$, the Laurent coefficients of $M$ are therefore sums of coefficients of $P$ over the fibers of $L$.

We analyze these fibers on vectors of fixed total degree. For even $n$, the map $L$ is injective on such vectors, so we reduce the problem to log-concavity of the coefficients of $P$ along integer lines. For odd $n$, each slice either has at most one nonzero term or, after a shift and reversal of the index, takes the form
\[
 C(q)=H(q)\sum_{t\in\ZZ}A(t)B(t+q),
\]
where $A(t)$, $B(u)$, and $H(q)$ are nonnegative log-concave sequences with interval support. Moreover, $A(t)$ and $B(u)$ have finite support. We recognize the sum as the convolution of $A(-t)$ and $B(u)$ and apply closure under convolution and pointwise multiplication to obtain the required log-concavity.

\begin{remark}[Relation with Shen's work]\label{rem:shen}
In Shen's expansion \cite{She14}, the indices label canonical basis elements on the cluster $\mathcal X$-variety and lie in the tropical $\mathcal A$-space. In our expansion, the indices are Laurent exponent vectors in a fixed chart of the cluster $\mathcal A$-variety. Thus, the two results concern different coefficient arrays: Shen describes the support of multiplication in a canonical basis, whereas we establish inequalities among the Laurent coefficients of individual cluster monomials. The ensemble framework relates the underlying spaces, but does not identify these coefficient arrays.

The polygon model gives a concrete combinatorial connection. Shen's construction uses integral laminations represented by weighted noncrossing edges and diagonals, with conditions on their weights. In our proof, compatible diagonals give nested or disjoint intervals in the fan expansions. This laminar structure determines both the support and the coefficients of an auxiliary product in Proposition~\ref{prop:coeff}. Tracking these coefficients under the fan substitution then gives log-concavity and interval support for the coordinate slices. Our results therefore complement the geometric study of supports with coefficient inequalities in fan coordinates on the cluster $\mathcal A$-variety.
\end{remark}

\subsection{Outline of the paper}
In Section~\ref{sec:prelim}, we fix the notation and recall cluster algebras and their polygon realization in type $A_n$. In Section~\ref{sec:sequences}, we introduce log-concavity and unimodality for sequences and Laurent polynomials. Then, we recall the closure properties needed in the proof. In Section~\ref{sec:laminar}, we give a coefficient formula for laminar products and establish log-concavity along integer lines. In Section~\ref{sec:fan}, we express cluster monomials in fan coordinates and describe the corresponding exponent fibers. In Section~\ref{sec:mainproof}, we prove Theorem~\ref{thm:main}. In Section~\ref{sec:examples}, we give explicit examples of rank $4$ and $5$. In Section~\ref{sec:scope}, we discuss seed independence and its relation to tropical geometry and canonical bases.

\section{Preliminaries}\label{sec:prelim}
We first fix the notation and recall the basic definitions of cluster algebras. We then introduce the polygon model of type $A_n$ and the fan initial cluster.
\subsection{Notation}
We write $\NN=\{0,1,2,\ldots\}$ and, for any positive integer $n$, $[n]=\{1,\ldots,n\}$. A subset $I\subseteq\ZZ$ is an \emph{integer interval} if $p,r\in I$ and $p\leq q\leq r$ imply $q\in I$ for every integer $q$. We allow the empty set, singletons, and unbounded intervals. For integers $a,b$, we write $[a,b]=\{i\in\ZZ:a\leq i\leq b\}$; this interval is empty when $a>b$. For a real number $r$, let $[r]_+=\max\{r,0\}$. Empty sums and products are taken to be $0$ and $1$, respectively, and $0!=1$.

We write vectors and tuples in boldface, such as $\mathbf{k}$, $\mathbf{b}$, and $\boldsymbol{\eta}$, while their scalar coordinates are denoted by $k_i$, $b_i$, and $\eta_i$, respectively. For $\mfalpha=(\alpha_1,\ldots,\alpha_n)\in\ZZ^n$ and a tuple of variables $\mfx=(x_1,\ldots,x_n)$, we write
\[
 \mfx^{\mfalpha}=\prod_{i=1}^n x_i^{\alpha_i},
 \quad \wt(\mfalpha)=\sum_{i=1}^n\alpha_i.
\]
We denote by $S_n$ the group of permutations of $[n]$ and denote by $\mfe_j$ the $j$th standard basis vector. If $f$ is a Laurent polynomial in $\mfx$, then $[\mfx^{\mfalpha}]f$ denotes its coefficient at $\mfx^{\mfalpha}$, and
\[
 \supp(f)=\{\mfalpha\in\ZZ^n:[\mfx^{\mfalpha}]f\neq0\}.
\]
We use the same monomial, weight, and coefficient notation for tuples of any finite length, in particular for the auxiliary variables $\mfz$ introduced below. We set coefficients outside the support equal to zero. 

\subsection{Seeds and cluster monomials}
We recall seeds and mutation in the setting fixed above, following \cite{FZ02, FZ03, Nak23}.

\begin{definition}[Seed and mutation]
Let $\mathcal F$ be a rational function field of transcendence degree $n$ over $\QQ$. A \emph{seed} is a pair $\Sigma=(\mfx,B)$, where $\mfx=(x_1,\ldots,x_n)$ is a transcendence basis generating $\mathcal F$ and $B=(b_{ij})$ is a skew-symmetric integer matrix. We call $\mfx$ a \emph{cluster}, its entries \emph{cluster variables}, and $B$ the \emph{exchange matrix}. The number $n$ of cluster variables is called the \emph{rank} of the seed.

For any $k\in[n]$, the \emph{mutation of $\Sigma$ in the direction $k$} is $\mu_k(\Sigma)=(\mfx',B')$, where
\begin{equation}\label{eq:mutation}
 x_i'=\begin{cases}
 x_i,&i\neq k,\\[2pt]
 \displaystyle\frac{\prod_{j=1}^n x_j^{[b_{jk}]_+}
                 +\prod_{j=1}^n x_j^{[-b_{jk}]_+}}{x_k},&i=k,
 \end{cases}
\end{equation}
and
\begin{equation}\label{eq:Bmutation}
 b_{ij}'=\begin{cases}
 -b_{ij},&i=k\text{ or }j=k,\\
 b_{ij}+[b_{ik}]_+b_{kj}+b_{ik}[-b_{kj}]_+,&\text{otherwise}.
 \end{cases}
\end{equation}
In fact, $(\mfx',B')$ is again a seed and $\mu_k$ is involutive, that is $\mu_k(\mfx',B')=(\mfx,B)$; see also \cite{Nak23}. A seed obtained by a finite sequence of mutations is said to be in the \emph{mutation equivalence class} of $\Sigma$.
\end{definition}

\begin{definition}[Cluster algebra and cluster monomial]
The \emph{coefficient-free cluster algebra} $\cA=\cA(\Sigma)$ is the $\ZZ$-subalgebra of $\mathcal F$ generated by the variables in all seeds in the mutation equivalence class of $\Sigma$. The \emph{rank} of $\cA$ is $n$. A \emph{cluster monomial} is an element
\[
 M=y_1^{a_1}\cdots y_n^{a_n},\quad a_1,\ldots,a_n\in\NN,
\]
where $(y_1,\ldots,y_n)$ is any cluster.
\end{definition}

In fact, skew-symmetric integer matrices of size $n$ are in one-to-one correspondence with quivers on the vertex set $[n]$ without loops or oriented $2$-cycles. We associate $B$ with the quiver having $b_{ij}$ arrows from $i$ to $j$ whenever $b_{ij}>0$. The cluster algebra is of \emph{type $A_n$} if $B$ belongs to the mutation equivalence class of the exchange matrix of a path on $n$ vertices with one arrow on each edge. By the Laurent phenomenon \cite[Theorem~3.1]{FZ02}, every cluster variable with respect to the initial cluster $\mfx=(x_1,\dots,x_n)$ belongs to
\[
 \ZZ[x_1^{\pm1},\ldots,x_n^{\pm1}].
\]
Moreover, the positivity in \cite[Theorem~1.1]{LS15} and \cite[Theorem~4.10]{GHKK18} ensure that all its Laurent coefficients are nonnegative. Therefore, every cluster monomial has a Laurent expansion with nonnegative integer coefficients in any fixed initial cluster.
\begin{remark}
In the terminology of Fock-Goncharov \cite{FG09}, the preceding positivity statement says that cluster monomials are \emph{universally positive}: their Laurent expansions have nonnegative integer coefficients in every cluster chart of the $\mathcal A$-variety. In type $A_n$, cluster monomials form the theta basis of Gross-Hacking-Keel-Kontsevich \cite{GHKK18}. Hence, Theorem~\ref{thm:main} establishes further properties of their Laurent coefficients in fan charts.
\end{remark}

\subsection{The polygon model}
We fix a convex polygon with vertices $0,1,\ldots,n+2$ in cyclic order. A \emph{boundary edge} joins consecutive vertices, where $0$ and $n+2$ are also consecutive. A \emph{diagonal} joins two nonconsecutive vertices. 
\begin{definition}[Compatible diagonals and triangulations]\label{def:compatible}
Two distinct diagonals are \emph{compatible} if their interiors do not intersect (they may share endpoints). A family of diagonals is \emph{compatible} if every two distinct members are compatible. A \emph{triangulation} is a maximal compatible family of diagonals.
\end{definition}
Note that every triangulation of the $(n+3)$-gon contains exactly $n$ diagonals; see \Cref{fig:a4-fan-quiver}.

\begin{proposition}[{\cite{FZ03,FST08,Sch08}}]\label{prop:polygon}
Cluster variables of type $A_n$ correspond to diagonals of the $(n+3)$-gon, and clusters correspond to triangulations. Mutation corresponds to flip replacing one diagonal of a quadrilateral by the other. We write $p_{ab}=p_{ba}$ for the variable associated with a diagonal joining vertices $a,b$ and set $p_{ab}=1$ for boundary edges. If $a,b,c,d$ are four vertices in cyclic order, then
\begin{equation}\label{eq:ptolemy}
 p_{ac}p_{bd}=p_{ab}p_{cd}+p_{ad}p_{bc}.
\end{equation}
\end{proposition}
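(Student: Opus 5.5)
The plan is to realize the cluster algebra concretely inside a Plücker ring, where the Ptolemy relation \eqref{eq:ptolemy} becomes a polynomial identity. Then the only remaining work is to match flips of triangulations with mutations of seeds.

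\emph{Step 1: the polygon ring.} Let $\Delta_{ab}=u_av_b-u_bv_a$ for $0\le a<b\le n+2$, where $u_i,v_i$ are indeterminates. These are the Plücker coordinates of a generic $2\times(n+3)$ matrix. For $a,b,c,d$ in cyclic order, the three-term Plücker relation
\[
\Delta_{ac}\Delta_{bd}=\Delta_{ab}\Delta_{cd}+\Delta_{ad}\Delta_{bc}
\]
holds identically. I would first prove the version of the proposition with frozen variables. The cluster variables are the $\Delta_{ab}$ for diagonals, and the boundary coordinates $\Delta_{i,i+1}$ and $\Delta_{0,n+2}$ are treated as frozen. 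At the very end I specialize the frozen variables to $1$. This is legitimate because the exchange relations are subtraction-free polynomial relations, so specialization commutes with mutation. It also avoids the parity problem of rescaling columns so that all boundary minors equal $1$.

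\emph{Step 2: quivers of triangulations and flips.} To a triangulation $T$ attach the quiver with vertices the diagonals of $T$. Put an arrow $\gamma\to\gamma'$ whenever $\gamma,\gamma'$ are two sides of a common triangle and $\gamma'$ follows $\gamma$ clockwise. For the fan at vertex $0$, this quiver is the linearly oriented path, which fixes the initial seed $(\Delta_{0,2},\dots,\Delta_{0,n+1};B_{\mathrm{fan}})$.

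Next I check locally that flipping a diagonal $\gamma=ac$ in the quadrilateral $abcd$ to $bd$ does two things:
\begin{itemize}
\item It transforms the quiver by the matrix mutation \eqref{eq:Bmutation}. This is a finite case check on the four sides of the quadrilateral: the $2$-cycles created by mutation cancel exactly the arrows that disappear, since the triangles $abc,acd$ are replaced by $abd,bcd$.
\item The exchange relation \eqref{eq:mutation} at $\gamma$ is precisely the Plücker relation above, with the four sides of the quadrilateral as the two monomials.
\end{itemize}
Since any two triangulations are connected by flips, every seed obtained from the fan seed by mutations is of the form $(\{\Delta_\gamma\}_{\gamma\in T},B_T)$ for some triangulation $T$. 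Conversely, every triangulation arises this way. Hence every diagonal gives a cluster variable $p_\gamma$, and every cluster variable is of this form. Well-definedness is automatic, because $\Delta_\gamma$ is a fixed element of the ambient field that does not depend on the flip sequence.

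\emph{Step 3: injectivity, which I expect to be the main obstacle.} It remains to see that distinct diagonals give distinct cluster variables and that clusters are exactly the triangulations. I would prove this by computing the denominator vector of $p_{ab}$ in the fan cluster. By the Ptolemy recursion it is $\mathbf{1}$ on the fan diagonals crossed by $ab$ and $0$ elsewhere, or $-\mfe_j$ if $ab$ is itself a fan diagonal. The set of crossed fan diagonals is an interval $\{0i: a<i<b\}$, and the pair (interval, endpoints) determines $ab$. So the map from diagonals to cluster variables is injective, and then clusters correspond exactly to triangulations.

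Alternatively, one can count. The number of diagonals is $n(n+3)/2$, which equals the number of cluster variables of type $A_n$ by the Fomin--Zelevinsky finite type classification. Combined with the surjectivity from Step 2, this gives bijectivity.

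Finally, I specialize the boundary $\Delta$'s to $1$ to obtain the coefficient-free statement, including \eqref{eq:ptolemy} with $p_{ab}=1$ on boundary edges.
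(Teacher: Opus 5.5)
The paper gives no proof of this proposition; it is quoted from \cite{FZ03,FST08,Sch08}. Your argument is essentially the Fomin--Zelevinsky Grassmannian proof: type $A_n$ sits inside the Pl\"ucker ring of $\mathrm{Gr}(2,n+3)$, flips are matched with mutations by a local check, and $p_\gamma$ is well defined because $\Delta_\gamma$ is a fixed element. Your injectivity step via denominator vectors in the fan cluster is a self-contained alternative to invoking the finite-type count, and the computation is right. By Lemma~\ref{lem:fanexp}, for a diagonal $(a,b)$ with $1\le a<b\le n+2$, the reduced denominator of $p_{ab}$ is $\prod_{a<i<b}u_i$, and this determines $(a,b)$. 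So the route is correct.

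Two details of the frozen version need tightening.
\begin{enumerate}
\item The quiver in Step~2 must include the boundary edges as frozen vertices, with the same clockwise rule. If its vertices are only the diagonals, the exchange binomial at $\gamma$ omits the boundary sides. It is then not the Pl\"ucker relation unless those minors are already $1$.
\item You need the $2n+3$ elements $\Delta_{0,i}$ ($2\le i\le n+1$) together with the $n+3$ boundary minors to be algebraically independent. Only then can the ambient field of the abstract seed be identified with the field of Pl\"ucker coordinates. This holds because the Ptolemy recursion shows they generate that field, which has transcendence degree $2n+3$.
\end{enumerate}
Since the statement concerns any type-$A_n$ algebra, also add that mutating an arbitrary initial seed to one with a linearly oriented path quiver does not change the cluster algebra.

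You can in fact avoid frozen variables and specialization altogether. In the notation of \eqref{eq:u} and \eqref{eq:w}, put $S_i=\sum_{j=1}^{i-1}w_j$. Take the $2\times(n+3)$ matrix whose column $0$ is $(0,-1)$ and whose column $i\ge1$ is $(u_i,u_iS_i)$. Its minors are $\Delta_{0,i}=u_i$ and $\Delta_{ab}=u_au_b\sum_{i=a}^{b-1}w_i$ for $1\le a<b$. Hence every boundary minor equals $1$, and the fan minors are exactly the $x_i$. The three-term Pl\"ucker relations then give the coefficient-free Ptolemy relations directly in $\QQ(x_1,\dots,x_n)$. The parity issue you mention never arises, because the matrix is built rather than rescaled. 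Your quiver on diagonals alone becomes the correct one. Lemma~\ref{lem:fanexp} and the denominator vectors of your Step~3 follow at once.
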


\begin{figure}[htbp]
\centering
\begin{tikzpicture}[line cap=round,line join=round]
  \coordinate (a) at (-2.2,-1.05);
  \coordinate (b) at ( 1.9,-1.05);
  \coordinate (c) at ( 2.25, 1.15);
  \coordinate (d) at (-1.55, 1.45);
  \draw[thick] (a)--node[below,font=\small] {$p_{ab}$}(b)
               --node[right,font=\small] {$p_{bc}$}(c)
               --node[above,font=\small] {$p_{cd}$}(d)
               --node[left,font=\small] {$p_{ad}$}(a);
  \draw[blue!70!black,very thick] (a)--
    node[pos=.27,above left,font=\small,fill=white,inner sep=1pt] {$p_{ac}$}(c);
  \draw[red!70!black,very thick] (b)--
    node[pos=.27,above right,font=\small,fill=white,inner sep=1pt] {$p_{bd}$}(d);
  \foreach \v in {a,b,c,d}
    \fill (\v) circle (1.5pt);
  \node[below left,font=\small] at (a) {$a$};
  \node[below right,font=\small] at (b) {$b$};
  \node[above right,font=\small] at (c) {$c$};
  \node[above left,font=\small] at (d) {$d$};
\end{tikzpicture}
\caption{The Ptolemy relation for cyclically ordered vertices $a,b,c,d$. The product $p_{ac}p_{bd}$ of the crossing diagonals equals the sum $p_{ab}p_{cd}+p_{ad}p_{bc}$ of the products of opposite sides.}
\label{fig:ptolemy-quadrilateral}
\end{figure}
Figure~\ref{fig:ptolemy-quadrilateral} illustrates \eqref{eq:ptolemy}. Note that boundary edges are specialized to $1$ and do not represent additional cluster variables. Hence, for any triangulation $\cT$, we can write a monomial in the associated cluster as
\begin{equation}\label{eq:polygonmonomial}
 M=\prod_{(a,b)\in\cT}p_{ab}^{r_{ab}},\quad r_{ab}\in\NN.
\end{equation}

\begin{definition}[Fan triangulation]\label{def: fan}
A \emph{fan triangulation} consists of all diagonals incident with one fixed vertex. We denote by $(a,b)$ the diagonal joining vertices $a$ and $b$. Here, $(a,b)$ and $(b,a)$ denote the same diagonal. After relabeling the vertices, we take the common vertex to be $0$ and write
\begin{equation}\label{eq:fantri}
 \cT_0=\{(0,2),(0,3),\ldots,(0,n+1)\},
 \quad x_i=p_{0,i+1}\quad(i\in[n]).
\end{equation}
The initial cluster $\mfx$ in \eqref{eq:fantri} is called the \emph{fan initial cluster} or \emph{fan coordinates}. 

For example, \Cref{fig:a4-fan-quiver} represents a fan triangulation and $(0,2)$ denotes the diagonal from vertex $0$ to vertex $2$, whose cluster variable is $p_{0,2}=x_1$. The associated quiver is of type $A_4$.
\end{definition}

\begin{figure}[ht]
\centering
\begin{tikzpicture}[
  line cap=round,
  line join=round,
  diagonal/.style={blue!70!black,very thick},
  qvertex/.style={circle,draw,fill=white,minimum size=5.4mm,inner sep=0pt,font=\small},
  qarrow/.style={-{Stealth[length=2.2mm]},thick}
]
  \coordinate (c) at (0,0);
  \foreach \k in {0,...,6}
    \coordinate (v\k) at ({180-360*\k/7}:2.05cm);
  \draw[thick] (v0)--(v1)--(v2)--(v3)--(v4)--(v5)--(v6)--cycle;
  \foreach \k [evaluate=\k as \i using int(\k-1)] in {2,...,5}
    \draw[diagonal] (v0)--node[pos=.54,font=\small,fill=white,inner sep=1pt] {$x_\i$}(v\k);
  \foreach \k in {0,...,6}
    \node[font=\small] at ({180-360*\k/7}:2.38cm) {$\k$};
  \begin{scope}[xshift=5.2cm]
    \foreach \i in {1,...,4}
      \node[qvertex] (q\i) at ({1.42*(4-\i)},0) {$x_\i$};
    \foreach \i [evaluate=\i as \j using int(\i+1)] in {1,...,3}
      \draw[qarrow] (q\j)--(q\i);
  \end{scope}
\end{tikzpicture}
\caption{The fan triangulation of the heptagon and its linearly oriented quiver of type $A_4$.}
\label{fig:a4-fan-quiver}
\end{figure}

\section{Log-concavity, unimodality, and convolution}\label{sec:sequences}
We recall basic facts about log-concave sequences and convolution based on \cite{Sta89,Bre89}. We then review log-concavity \cite{CHS26} and unimodality \cite{Che26} for cluster Laurent polynomials, formulated here in terms of coordinate slices.
\subsection{Sequences and interval support}
Throughout this paper, $\mfa=\{a_q\}_{q\in\ZZ}$ denotes a sequence of nonnegative real numbers. Its support is $\supp(\mfa)=\{q\in\ZZ:a_q>0\}$. We say that $\mfa$ \emph{has interval support} if its support is an integer interval, or equivalently, if $a_p,a_r>0$ with $p<r$ implies $a_q>0$ for every integer $p<q<r$. We also express this property by saying that $\mfa$ \emph{has no internal zeros}. The zero sequence and sequences supported at a single index have interval support under our convention. A sequence has \emph{finite support} if it has only finitely many nonzero entries. Hence, a nonzero sequence with finite interval support is positive exactly on some $[a,b]$ with $a\leq b$.

\begin{definition}[Log-concavity and unimodality]
A nonnegative sequence $\mfa=\{a_q\}_{q\in\ZZ}$ is \emph{log-concave} if
\begin{equation}\label{eq:seqLC}
 a_q^2\geq a_{q-1}a_{q+1}\quad(q\in\ZZ).
\end{equation}
A nonzero sequence of finite support is \emph{unimodal} if there exists an integer $q_0$ such that it is nondecreasing for $q\leq q_0$ and nonincreasing for $q\geq q_0$. The zero sequence is regarded as unimodal.
\end{definition}
We will apply these definitions to the coefficient sequences obtained by fixing all but one exponent of a Laurent polynomial.

\begin{lemma}\label{lem:ratio}
Let $\mfa=\{a_q\}_{q\in\ZZ}$ be a nonnegative log-concave sequence with interval support. Whenever $a_q,a_{q+1},a_{q+2}>0$, we have
\[
 \frac{a_{q+1}}{a_q}\geq\frac{a_{q+2}}{a_{q+1}}.
\]
If $\mfa$ has finite support, then it is unimodal.
\end{lemma}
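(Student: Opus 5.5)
The argument is elementary and has two parts: the ratio inequality, and then unimodality, which follows from monotonicity of the ratios on the support.

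For the ratio inequality, suppose $a_q,a_{q+1},a_{q+2}>0$. Log-concavity \eqref{eq:seqLC} at the index $q+1$ gives $a_{q+1}^2\geq a_qa_{q+2}$. Dividing both sides by the positive number $a_qa_{q+1}$ yields
\[
 \frac{a_{q+1}}{a_q}\geq\frac{a_{q+2}}{a_{q+1}}.
\]
Interval support is not needed for this step itself. Its role is to ensure that the consecutive ratios $r_q=a_{q+1}/a_q$ are defined along the whole support, so that the inequality chains.

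For unimodality, assume $\mfa$ is nonzero with finite support. Then the support is $[a,b]$ for some $a\leq b$, and $a_q>0$ exactly on this interval. If $a=b$, the sequence is trivially unimodal with $q_0=a$.

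Otherwise, define $r_q=a_{q+1}/a_q$ for $q\in[a,b-1]$. By the first part, $r_a\geq r_{a+1}\geq\cdots\geq r_{b-1}$. Let $q_0$ be the smallest $q\in[a,b]$ such that either $q=b$ or $r_q<1$. Then for $a\leq q<q_0$ we have $r_q\geq1$, so $a_q\leq a_{q+1}$. For $q_0\leq q\leq b-1$, monotonicity of the ratios gives $r_q\leq r_{q_0}<1$, so $a_{q+1}<a_q$.

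It remains to handle the indices outside the support. For $q<a$ the entries are $0$, so the sequence is nondecreasing up to $a$, since $0\leq a_a$. For $q>b$ the entries are $0$, so it is nonincreasing after $b$. Hence the sequence is nondecreasing for $q\leq q_0$ and nonincreasing for $q\geq q_0$, which is the definition of unimodality.

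The only point needing care is this bookkeeping at the boundary of the support. Note that without interval support, a zero inside the support would break the chain of ratios, and unimodality could genuinely fail: for example, the sequence $1,0,1$ is log-concave but not unimodal.
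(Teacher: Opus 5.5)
Your proof is correct and follows the paper's route: divide $a_{q+1}^2\geq a_qa_{q+2}$ by $a_qa_{q+1}$, then use that the ratios are nonincreasing on the support interval; you additionally spell out the choice of $q_0$ and the bookkeeping at the boundary of the support, which the paper leaves implicit. One slip in your closing remark: $1,0,1$ is not log-concave, because $0^2\geq 1\cdot 1$ fails at the middle index, but $1,0,0,1$ is a correct example of a log-concave, non-unimodal sequence with an internal zero.
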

\begin{proof}
We obtain the ratio inequality by dividing $a_{q+1}^2\geq a_q a_{q+2}$ by $a_q a_{q+1}$. If the support is finite, the successive ratios are nonincreasing, so the positive terms cannot increase again after they start to decrease. Hence the sequence is unimodal. In particular, the zero and singleton cases are immediate.
\end{proof}

\begin{lemma}\label{lem:closures}
Let $\mfa,\mfa^{(1)},\ldots,\mfa^{(s)}$ be nonnegative log-concave sequences with interval support. For any $h\in\ZZ$, each of the sequences
\[
 \{a_{q-h}\}_{q\in\ZZ},\quad
 \{a_{-q}\}_{q\in\ZZ},\quad
 \{\prod_{r=1}^s a^{(r)}_q\}_{q\in\ZZ}
\]
is log-concave and has interval support.
\end{lemma}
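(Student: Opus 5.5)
The plan is to verify each of the three operations separately, checking interval support and the inequality \eqref{eq:seqLC} at every index $q\in\ZZ$. All three arguments are direct manipulations of the defining conditions, with no appeal to earlier results beyond the definitions.

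\emph{Shift.} Put $b_q=a_{q-h}$. Then $\supp(\mfb)=\supp(\mfa)+h$, and a translate of an integer interval is again an integer interval. The inequality $b_q^2\geq b_{q-1}b_{q+1}$ is exactly $a_{q-h}^2\geq a_{q-h-1}a_{q-h+1}$, which is \eqref{eq:seqLC} for $\mfa$ at the index $q-h$.

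\emph{Reversal.} Put $c_q=a_{-q}$. Then $\supp(\mfc)=-\supp(\mfa)$, and the negative of an integer interval is an integer interval. The inequality $c_q^2\geq c_{q-1}c_{q+1}$ reads $a_{-q}^2\geq a_{-q+1}a_{-q-1}$, which is \eqref{eq:seqLC} for $\mfa$ at the index $-q$.

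\emph{Pointwise product.} Put $d_q=\prod_{r=1}^s a^{(r)}_q$. Because all entries are nonnegative, $d_q>0$ holds exactly when every $a^{(r)}_q>0$. Hence $\supp(\mfd)=\bigcap_{r=1}^s\supp(\mfa^{(r)})$, and an intersection of integer intervals is an integer interval; it may be empty, which is allowed by our convention. For log-concavity, each factor satisfies $(a^{(r)}_q)^2\geq a^{(r)}_{q-1}a^{(r)}_{q+1}\geq0$. Multiplying these $s$ inequalities between nonnegative quantities gives $d_q^2\geq d_{q-1}d_{q+1}$. If $s=0$, the empty product is the constant sequence $1$, which is trivially log-concave with support $\ZZ$.

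The only point needing care is the product step. There, nonnegativity is what allows the inequalities to be multiplied termwise, and it is also what identifies the support of the product with the intersection of the supports. None of these steps should present a real obstacle.
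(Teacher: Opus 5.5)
Your proposal is correct and follows essentially the same approach as the paper: shift and reversal re-index the inequalities \eqref{eq:seqLC} and move the support to another integer interval, and for the pointwise product you multiply the nonnegative inequalities termwise and identify the support as the intersection of the individual supports. Your version just spells out the reindexing and the role of nonnegativity more explicitly than the paper's short proof does.
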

\begin{proof}
Note that shifting or reversing the index preserves the log-concavity inequalities and interval support. For the pointwise product, we multiply the inequalities for the individual sequences. Its support is the intersection of their supports and is therefore an integer interval again.
\end{proof}

\subsection{Convolution of sequences}
We next recall that convolution preserves log-concavity for nonnegative sequences with finite interval support.

\begin{definition}[Convolution and reversal]
For sequences $\mfa,\mfb$ of finite support, their \emph{convolution} is
\[
 (\mfa*\mfb)_q=\sum_{t\in\ZZ}a_t b_{q-t}.
\]
We define their \emph{correlation} to be $\{\sum_t a_t b_{t+q}\}_{q\in\ZZ}$ and write $\check{\mfa}=\{\check{a}_t\}_{t\in\ZZ}$ for the \emph{reversal} of $\mfa$, where $\check{a}_t=a_{-t}$. Then,
\[
 \sum_{t\in\ZZ}a_t b_{t+q}=(\check{\mfa}*\mfb)_q.
\]
Note that all these sums are finite.
\end{definition}

\begin{proposition}[{\cite[Proposition~2]{Sta89}}]\label{prop:convolution}
If $\mfa,\mfb$ are nonnegative log-concave sequences of finite support with no internal zeros, then $\mfa*\mfb$ has the same properties. In particular, it is unimodal.
\end{proposition}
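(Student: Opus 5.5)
The proof establishes the three properties for $\mfa*\mfb$ in the order: support, log-concavity, unimodality. Write $c_q=(\mfa*\mfb)_q=\sum_t a_t b_{q-t}$, and assume both sequences are nonzero; otherwise the convolution is zero and everything is trivial.

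\emph{Finite interval support.} Suppose $\supp(\mfa)=[p,r]$ and $\supp(\mfb)=[p',r']$. Then $c_q>0$ exactly when some $t\in[p,r]$ has $q-t\in[p',r']$. Since all terms are nonnegative, this happens exactly when $q\in[p+p',r+r']$. So $\mfa*\mfb$ has finite interval support.

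\emph{Log-concavity.} I would use the classical Toeplitz-matrix argument. A nonnegative sequence with no internal zeros is log-concave if and only if the infinite Toeplitz matrix $T_{\mfa}=(a_{i-j})_{i,j}$ is totally positive of order $2$ (TP$_2$), meaning all its $2\times2$ minors are nonnegative.
\begin{itemize}
\item The forward direction of this equivalence is the main point. Given $a_q^2\geq a_{q-1}a_{q+1}$ and interval support, one must show $a_{i}a_{j}\geq a_{i-s}a_{j+s}$ for all $i\le j$ and $s\ge0$.
\item This follows by chaining the ratio inequality of Lemma~\ref{lem:ratio} across the support. One checks separately that terms outside the support make the right-hand side vanish, and it is exactly here that the no-internal-zeros hypothesis is needed.
\item Next observe that $T_{\mfa*\mfb}=T_{\mfa}T_{\mfb}$. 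All sums are finite, so this product is well defined.
\item By the Cauchy--Binet formula, every $2\times2$ minor of the product is a finite sum of products of $2\times2$ minors of the factors. It is therefore nonnegative.
\item In particular the minor with rows $q,q+1$ and columns $0,1$ of $T_{\mfa*\mfb}$ equals $c_q^2-c_{q-1}c_{q+1}\geq0$.
\end{itemize}

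An alternative route avoids matrices altogether. Expand
\[
c_q^2-c_{q-1}c_{q+1}=\sum_{t,u}a_ta_u\bigl(b_{q-t}b_{q-u}-b_{q-1-t}b_{q+1-u}\bigr),
\]
then symmetrize in $t\leftrightarrow u$ and pair the terms so that each group is nonnegative. This is again the TP$_2$ inequality for $\mfb$ combined with $a_ta_u$ and its shifts. I would follow the Cauchy--Binet route because it is cleaner.

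\emph{Unimodality.} This follows immediately from Lemma~\ref{lem:ratio}, since $\mfa*\mfb$ is now known to be log-concave with finite interval support.

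I expect the main obstacle to be the careful verification of the TP$_2$ characterization at the boundary of the support. Log-concavity only controls consecutive terms, and without interval support the Toeplitz matrix need not be TP$_2$. For example, $(1,0,0,1)$ is log-concave but its convolution square $(1,0,0,2,0,0,1)$ has internal zeros, though it is still log-concave; a different example is needed to break log-concavity of the convolution. The proof must use interval support exactly at this point.
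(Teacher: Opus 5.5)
Your argument is correct. It is the standard total-positivity proof of the cited result \cite[Proposition~2]{Sta89}: log-concavity with no internal zeros is equivalent to $T_{\mfa}$ being TP$_2$, then $T_{\mfa*\mfb}=T_{\mfa}T_{\mfb}$, then Cauchy--Binet. The paper gives no proof beyond that citation and the support formula $[l_1+l_2,r_1+r_2]$, which your first step verifies.

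For the example you say is still needed, $(1,0,0,1)*(1,1)=(1,1,0,1,1)$ violates $c_2^2\geq c_1c_3$.
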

In fact, if the supports $\supp(\mfa)=[l_1,r_1]$ and $\supp(\mfb)=[l_2,r_2]$ are nonempty, then $\supp(\mfa*\mfb)=[l_1+l_2,r_1+r_2]$.
For example, we take $\mfa=(1,2,1)$ on $[0,2]$ and $\mfb=(1,1)$ on $[0,1]$, with both sequences zero elsewhere. Their convolution $\mfa*\mfb=(1,3,3,1)$ is log-concave and has support $[0,3]$.

%\Needspace{14\baselineskip}
\subsection{Coordinate slices of Laurent polynomials}
We now formulate log-concavity and unimodality for Laurent polynomials in terms of their coordinate slices.

\begin{definition}[Coordinate slice]\label{def:slice}
We write a Laurent polynomial with nonnegative coefficients as
\[
 f(\mfx)=\sum_{\mfalpha\in\ZZ^n}c_{\mfalpha}\mfx^{\mfalpha},
 \quad c_{\mfalpha}\geq0.
\]
We fix a direction $j\in[n]$ and integer exponents $\alpha_i$ for all $i\neq j$. The corresponding \emph{coordinate slice in direction $j$} is the sequence $\mfa=\{a_q\}_{q\in\ZZ}$ defined by
\[
 a_q=\bigl[x_1^{\alpha_1}\cdots x_{j-1}^{\alpha_{j-1}}
             x_j^q x_{j+1}^{\alpha_{j+1}}\cdots x_n^{\alpha_n}\bigr]f.
\]
Equivalently, $a_q=c_{\mfalpha(q)}$, where $\mfalpha(q)=(\alpha_1,\ldots,\alpha_{j-1},q,\alpha_{j+1},\ldots,\alpha_n)$. Note that this sequence is indexed by all of $\ZZ$ and has finite support, with $a_q=0$ whenever the corresponding monomial is absent.
\end{definition}

\begin{definition}[Log-concavity and unimodality of Laurent polynomials]\label{def:multivariate}
We call $f$ \emph{log-concave} if every coordinate slice is log-concave. More precisely, this means
\begin{equation}\label{eq:polyLC}
 c_{\mfalpha}^2\geq c_{\mfalpha-\mfe_j}c_{\mfalpha+\mfe_j}
 \quad(\mfalpha\in\ZZ^n,\ j\in[n]).
\end{equation}
We call $f$ \emph{unimodal} if, for every nonzero coordinate slice $\mfa=\{a_q\}_{q\in\ZZ}$, we can choose an integer $q_0$ such that
\[
 a_q\leq a_{q+1}\quad(q<q_0),
 \quad a_q\geq a_{q+1}\quad(q\geq q_0).
\]
Note that we choose $q_0$ after fixing the other exponents. Thus, different slices may have different modes. We say that $f$ has \emph{no internal zeros in coordinate directions} if every coordinate slice has interval support. By Laurent phenomenon \cite{FZ02}, a cluster monomial has any of these properties with respect to an initial cluster if its Laurent expansion in that cluster has the property.
\end{definition}

\begin{example}\label{ex:a3-cluster-slices}
We illustrate how to read coordinate slices from a Laurent expansion. We take the fan initial triangulation $\cT_0=\{(0,2),(0,3),(0,4)\}$ of type $A_3$, with $x_i=p_{0,i+1}$. The diagonals $\cT=\{(1,5),(2,4),(2,5)\}$ form a different triangulation. We consider the cluster monomial $M=p_{15}p_{24}p_{25}$ in the cluster associated with $\cT$ and expand it in the fan variables $\mfx$. By the fan expansion formula proved in Lemma~\ref{lem:fanexp}, we obtain
\[
 M=\frac{(1+x_2)(x_1+x_3)^2(x_1+x_3+x_1x_2)}{x_1x_2^3x_3^2}.
\]
Then, grouping the numerator by powers of $x_1$ and $x_3$, we obtain
\begin{align*}
 x_1x_2^3x_3^2M={}&x_1^3(1+2x_2+x_2^2)
   +x_1^2x_3(3+5x_2+2x_2^2)\\
  &+x_1x_3^2(3+4x_2+x_2^2)+x_3^3(1+x_2).
\end{align*}
Hence, the nonzero slices in the $x_2$ direction are given in Table~\ref{tab:a3-coordinate-slices}. We list the coefficients at exponents $-3,-2,-1$ for each fixed pair $(\alpha_1,\alpha_3)$.
\begin{table}[htbp]
\centering
\caption{Nonzero coordinate slices of $M$ in the $x_2$ direction.}
\label{tab:a3-coordinate-slices}
\begin{tabular}{cc}
\toprule
$(\alpha_1,\alpha_3)$ & $(a_{-3},a_{-2},a_{-1})$ \\
\midrule
$(2,-2)$ & $(1,2,1)$ \\
$(1,-1)$ & $(3,5,2)$ \\
$(0,0)$ & $(3,4,1)$ \\
$(-1,1)$ & $(1,1,0)$ \\
\bottomrule
\end{tabular}
\end{table}
Each listed slice is log-concave with finite interval support, which implies unimodality by Lemma~\ref{lem:ratio}. All other slices in this direction are zero. Moreover, the $x_1$ and $x_3$ exponents of every term of $M$ sum to zero. Therefore, each slice in either of those directions has at most one nonzero term.
\end{example}

\begin{lemma}\label{lem:monomialshift}
Multiplication by a Laurent monomial with positive coefficient and permutation of the variables preserve log-concavity, unimodality, and the absence of internal zeros in coordinate directions. A nonnegative log-concave Laurent polynomial with no internal zeros in coordinate directions is unimodal.
\end{lemma}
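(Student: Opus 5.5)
The plan is to make each of the three operations act on the coefficient array through an explicit bijection of $\ZZ^n$, and then to read off the coordinate slices.

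\emph{Monomial multiplication.} Let $f=\sum_{\mfalpha}c_{\mfalpha}\mfx^{\mfalpha}$ and let $g=\lambda\mfx^{\mfbeta}f$ with $\lambda>0$ and $\mfbeta\in\ZZ^n$. Then the coefficients of $g$ are $d_{\mfalpha}=\lambda c_{\mfalpha-\mfbeta}$. Fix a direction $j$ and exponents $\alpha_i$ for $i\neq j$. The slice of $g$ is then $\{\lambda a_{q-\beta_j}\}_{q\in\ZZ}$, where $\mfa$ is the slice of $f$ in direction $j$ with fixed exponents $\alpha_i-\beta_i$. So every slice of $g$ is a positive scalar multiple of an index shift of a slice of $f$, and conversely.
\begin{itemize}
\item Scaling by $\lambda>0$ multiplies both sides of \eqref{eq:seqLC} by $\lambda^2$ and does not change the support.
\item Index shifts preserve log-concavity and interval support by Lemma~\ref{lem:closures}.
\item Unimodality is preserved: if $q_0$ is a mode for $\mfa$, then $q_0+\beta_j$ is a mode for the shifted and scaled slice.
\end{itemize}

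\emph{Permutation of variables.} Let $\sigma\in S_n$ and let $g(\mfx)=f(x_{\sigma(1)},\ldots,x_{\sigma(n)})$. The coefficient of $g$ at $\mfalpha$ equals the coefficient of $f$ at a permuted exponent vector. In particular, a slice of $g$ in direction $j$ is literally equal, as a sequence indexed by $q$, to a slice of $f$ in direction $\sigma(j)$, with the remaining fixed exponents permuted accordingly. Every property defined slicewise in Definition~\ref{def:multivariate} therefore transfers verbatim, and it transfers back along $\sigma^{-1}$.

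\emph{Final assertion.} Let $f$ be nonnegative and log-concave with no internal zeros in coordinate directions. Every coordinate slice is then a nonnegative log-concave sequence with interval support. It also has finite support, since $f$ is a Laurent polynomial (as noted in Definition~\ref{def:slice}). By Lemma~\ref{lem:ratio}, every nonzero slice is unimodal. Because unimodality of $f$ is required slice by slice with independently chosen modes, $f$ is unimodal.

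The only point needing care is the bookkeeping of which slice of $f$ corresponds to a given slice of $g$. This is routine, and I expect no real obstacle.
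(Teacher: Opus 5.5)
Your proposal is correct and follows the paper's argument. Monomial multiplication translates the coefficient array and scales it by a positive constant, a permutation of the variables permutes the coordinate directions, and the last claim follows from Lemma~\ref{lem:ratio} because every slice has finite support; you just spell out the slice bookkeeping that the paper leaves implicit. One harmless slip: with $g(\mfx)=f(x_{\sigma(1)},\ldots,x_{\sigma(n)})$, the slices of $g$ in direction $j$ are slices of $f$ in direction $\sigma^{-1}(j)$, not $\sigma(j)$, and this does not affect the argument.
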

Indeed, multiplication by a Laurent monomial translates the coefficient array and scales it by a positive constant, while a permutation of the variables permutes the coordinate directions. Hence, both operations preserve all three properties. The last assertion follows from Lemma~\ref{lem:ratio}, since every coordinate slice has finite support.

\section{Coefficients of laminar products}\label{sec:laminar}
We now derive a coefficient formula for products indexed by a laminar family. We then use this formula to establish log-concavity along arbitrary integer lines of exponent vectors.
\subsection{Laminar families and coefficient support}
We first introduce laminar families and then describe the support and coefficients of the associated products. Throughout this section, $m$ is an arbitrary positive integer denoting the number of auxiliary variables. In the application to type $A_n$ in Section~\ref{sec:fan}, we take $m=n+1$.

\begin{definition}[Laminar family]\label{def:laminar}
A family $\cL$ of nonempty subsets of $[m]$ is \emph{laminar} (see also \cite[Section~13.4]{Sch03}) if, for any $I,J\in\cL$, at least one of the following conditions holds:
\[
 I\cap J=\varnothing\quad\text{or}\quad I\subseteq J\quad\text{or}\quad J\subseteq I.
\]
It is a \emph{laminar interval family} if every member is an integer interval. We regard $\cL$ as a finite collection of distinct sets and record multiplicities separately by exponents.
\end{definition}

\begin{example}\label{ex:laminar-family}
We illustrate the distinction between containment, disjointness, and a partial overlap. On $[6]$, the family
\[
 \cL=\{[1,6],[1,3],[2,3],[4,6]\}
\]
is a laminar interval family. Indeed, we have $[2,3]\subset[1,3]\subset[1,6]$, while $[4,6]\subset[1,6]$ is disjoint from both $[1,3]$ and $[2,3]$. Hence, every pair is nested or disjoint. On the other hand, $\{[1,3],[3,5]\}$ is not laminar, since the two intervals intersect at $3$ and neither contains the other. Note that our intervals consist of integers, so a shared endpoint gives a nonempty intersection.
\end{example}

We fix a laminar family $\cL$ and exponents $a_I\in\NN$ for $I\in\cL$. If necessary, we add $[m]$ with exponent zero. Then, every member of $\cL$ is contained in a common largest set, while laminarity and the product remain unchanged. For the variables $\mfz=(z_1,\ldots,z_m)$, we define
\begin{equation}\label{eq:P}
 P(\mfz)=\prod_{I\in\cL}\left(\sum_{i\in I}z_i\right)^{a_I},
 \quad d=\sum_{I\in\cL}a_I.
\end{equation}
Then, $P$ is homogeneous of degree $d$. Note that $d=0$ is allowed and gives $P=1$.

For each $I\in\cL$ and $\mfk=(k_i)\in\ZZ^m$, we define
\begin{equation}\label{eq:Dh}
 D_I=\sum_{J\in\cL,\ J\subseteq I}a_J,
 \quad \wt_I(\mfk)=\sum_{i\in I}k_i,
 \quad h_I(\mfk)=\wt_I(\mfk)-D_I.
\end{equation}
Thus, $D_I$ is the total degree contributed by factors indexed by subsets of $I$, while $h_I(\mfk)$ measures the excess of $\wt_I(\mfk)$ over this degree. We set
\begin{equation}\label{eq:Omega}
 \Omega=\left\{\mfk\in\NN^m:
       \wt(\mfk)=d,\ h_I(\mfk)\geq0\text{ for all }I\in\cL\right\}.
\end{equation}
We call the vectors in $\Omega$ \emph{admissible}. Note that $D_{[m]}=d$, so $h_{[m]}(\mfk)=0$ whenever $\wt(\mfk)=d$. Hence, the inequality for $[m]$ imposes no additional restriction.

\begin{proposition}[Laminar coefficient formula]\label{prop:coeff}
The support of $P$ is exactly $\Omega$. For every $\mfk\in\Omega$,
\begin{equation}\label{eq:coeff}
 [\mfz^{\mfk}]P
 =\frac1{\prod_{i=1}^m k_i!}
   \prod_{I\in\cL}\frac{(h_I(\mfk)+a_I)!}{h_I(\mfk)!}.
\end{equation}
\end{proposition}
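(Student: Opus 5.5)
Induction on the laminar forest, processing members of $\cL$ from the leaves upward. Order $\cL$ so that every member appears after all members it contains (for instance by increasing cardinality), and prove a slightly more general statement. For a laminar family $\cL$ whose members are not necessarily all contained in one set, the support of $P$ is the set of $\mfk\in\NN^m$ with $\wt_I(\mfk)\geq D_I$ for all $I$ and $\wt_{U}(\mfk)=\sum_{J}a_J$ for the union $U$ of maximal members, with $k_i=0$ off $U$. On this support, \eqref{eq:coeff} holds with the factorial $1/k_i!$ taken only over $i\in U$. Adding $[m]$ with exponent $0$ recovers the stated form, since that factor contributes $(h+0)!/h!=1$.

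\textbf{Step 1: multiplicative splitting over disjoint blocks.} If $\cL$ is a disjoint union of subfamilies supported on pairwise disjoint sets, then $P$ factors as a product of polynomials in disjoint sets of variables. Hence coefficients multiply, supports are products, and both sides of \eqref{eq:coeff} factor accordingly. This reduces the problem to a single tree, i.e. a family with a unique maximal member $R$.

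\textbf{Step 2: removing the root.} Let $\cL'=\cL\setminus\{R\}$, a disjoint union of trees with roots $R_1,\dots,R_s\subseteq R$, and let $P'$ be its product. By induction and Step 1, $[\mfz^{\mfk'}]P'$ is given by the formula with $D_{R}'=D_R-a_R$. Now
\[
 \Bigl(\sum_{i\in R}z_i\Bigr)^{a_R}=\sum_{\mathbf{j}}\frac{a_R!}{\prod_i j_i!}\mfz^{\mathbf{j}},
\]
so $[\mfz^{\mfk}]P=\sum_{\mfk'}[\mfz^{\mfk'}]P'\cdot a_R!/\prod_{i\in R}(k_i-k_i')!$. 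The key simplification is that the formula for $P'$ depends on $\mfk'$ only through the individual $k'_i$ in $1/\prod k_i'!$ and the $h_I$ with $I\subsetneq R$. I would therefore rather not sum directly. Instead, I would use a cleaner generating-function identity: the coefficient of $\mfz^{\mfk}$ in $Q\cdot(\sum_{i\in R}z_i)^{a}$, where $Q$ has degree $D_R-a$, is what we want to show equals $[\mfz^{\mfk}]Q'$ times $\frac{(h_R+a)!}{h_R!}$ in the appropriate normalization.

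\textbf{Step 3: the exponential trick (main obstacle).} Concretely, I expect the cleanest route to pass to divided powers. Write $P=\sum_{\mfk}\frac{c_{\mfk}}{\prod k_i!}\cdot\prod_i k_i!\,\mfz^{\mfk}$ and note that multiplying by $(\sum_{i\in I}z_i)$ acts on the normalized coefficients $c_{\mfk}=\prod k_i!\,[\mfz^{\mfk}]P$ by $c\mapsto \sum_{i\in I}k_i\,c(\mfk-\mfe_i)$. The claim is then that $c_{\mfk}=\prod_{I}(h_I+a_I)!/h_I!$. For a single application to the root, if $c'$ is a product formula in which the factors for $I\subsetneq R$ are unchanged by $\mfk\mapsto\mfk-\mfe_i$ for $i$ outside $I$, one needs a telescoping identity. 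The obstacle is that these factors do change when $i\in I$. So I would instead induct by multiplying one linear factor $(\sum_{i\in R}z_i)$ at a time, raising $a_R$ by one, and verify the recursion
\[
 \sum_{i\in R}k_i\,c(\mfk-\mfe_i)=c(\mfk)\quad\text{when }a_R\text{ increases by }1.
\]
Because the tree below $R$ is arbitrary, this is delicate. A more robust alternative I would try first is a direct counting argument. Interpret $\prod k_i!\,[\mfz^{\mfk}]P$ as the number of ways to place $d$ labelled balls, each ball of factor $I$ going into a box in $I$, with box $i$ receiving exactly $k_i$ balls. Here factors are ordered and balls inside box $i$ are labelled by the $k_i$ positions. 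Then count by processing $\cL$ from leaves upward. When handling $I$, its $a_I$ factors must choose among the slots of boxes in $I$ not already taken by factors of subsets of $I$. There are exactly $\wt_I(\mfk)-(D_I-a_I)=h_I+a_I$ free slots, and they are filled injectively in $(h_I+a_I)!/h_I!$ ways. The number of free slots is independent of earlier choices by laminarity: the members processed earlier inside $I$ used exactly $D_I-a_I$ slots of $I$, and members disjoint from $I$ used none of them. Multiplying gives \eqref{eq:coeff}. Positivity of the product holds exactly when all $h_I\geq0$, including the root equality, which gives $\supp P=\Omega$.
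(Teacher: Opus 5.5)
Your final counting argument is correct and is essentially the paper's proof. The paper also labels the $k_i$ copies of each $z_i$ (your slots), assigns copies injectively to the linear factors while treating each $I\in\cL$ after its proper subsets, uses laminarity to see that exactly $h_I(\mfk)+a_I$ copies in $I$ remain free, and divides by $\prod_i k_i!$. The forest induction, the divided-power recursion in Steps 1--3, and the generalized statement with the union $U$ can all be dropped. For the support, it is cleaner to argue as the paper does that any choice of summands directly forces $\wt_I(\mfk)\geq D_I$, rather than reading off vanishing from $(h_I+a_I)!/h_I!$, which is undefined when $h_I<0$.
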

\begin{proof}
We write $P(\mfz)$ as
\[
 P=\prod_{I\in\cL}\prod_{r=1}^{a_I}
       \left(\sum_{i\in I}z_i\right),
\]
and label each linear factor by $(I,r)$. The coefficient $[\mfz^{\mfk}]P$ counts the choices of one summand from each factor in which $z_i$ is chosen exactly $k_i$ times.

Any such choice gives $k_i\geq0$ and $\wt(\mfk)=d$. For each $I\in\cL$, the factors indexed by $(J,r)$ with $J\subseteq I$ contribute exactly $D_I$ variables with indices in $I$. Hence, $\wt_I(\mfk)\geq D_I$, proving $\operatorname{supp}(P)\subseteq\Omega$.

We now fix $\mfk\in\Omega$. The target monomial $\mfz^{\mfk}$ contains $k_i$ occurrences of $z_i$, which we distinguish by labels $z_i^{(1)},\ldots,z_i^{(k_i)}$. Thus, we have $\sum_i k_i=d$ labeled copies in total. We count assignments of one copy to each linear factor $(I,r)$, using no copy more than once and allowing $z_i^{(s)}$ only when $i\in I$.

We assign copies to the factors associated with each $I\in\cL$ only after we have done so for all its proper subsets in $\cL$. By laminarity, any other set treated earlier is disjoint from $I$. Thus, the number of unused copies with indices in $I$ is
\[
 \wt_I(\mfk)-\sum_{J\in\cL,\ J\subsetneq I}a_J
 =h_I(\mfk)+a_I\geq a_I.
\]
Since all $a_I$ factors are equal to $\sum_{i\in I}z_i$, we can assign any remaining copy with index in $I$ to any of these factors. We therefore choose $a_I$ distinct copies and assign them to $(I,1),\ldots,(I,a_I)$ in
\[
 \frac{(h_I(\mfk)+a_I)!}{h_I(\mfk)!}
\]
ways, with value $1$ when $a_I=0$. This count is positive and does not depend on which copies were used earlier. At this step we use exactly $a_I$ copies, leaving the others available for later factors. After all sets have been treated, we have used $\sum_I a_I=d$ copies. Since the target monomial also contains $d$ copies, none remain. Multiplying the counts gives the total number of assignments
\[
 \prod_{I\in\cL}\frac{(h_I(\mfk)+a_I)!}{h_I(\mfk)!}.
\]

After forgetting the labels, each assignment gives a choice of summands with product $\mfz^{\mfk}$. Each such choice corresponds to exactly $\prod_{i=1}^m k_i!$ assignments of the labeled copies. Indeed, for each $i$, we can freely permute the $k_i$ copies among the factors contributing $z_i$, and these permutations are independent for different $i$. Dividing the displayed count by $\prod_i k_i!$, we obtain \eqref{eq:coeff}. The coefficient in \eqref{eq:coeff} is positive for every $\mfk\in\Omega$, which implies that $\operatorname{supp}(P)=\Omega$.
\end{proof}

\begin{example}\label{ex:laminar-coefficient}
We illustrate both the coefficient formula and the role of the admissibility conditions. Let $P=(z_1+z_2)^2(z_1+z_2+z_3)$, so $\cL=\{[1,2],[1,3]\}$ with exponents $2$ and $1$.
For $\mfk=(2,1,0)$, we have $\wt(\mfk)=3$, $h_{[1,2]}(\mfk)=1$ and $h_{[1,3]}(\mfk)=0$. Hence, by \eqref{eq:coeff}, we have
\[
 [z_1^2z_2]P
 =\frac1{2!\,1!\,0!}\frac{3!}{1!}\frac{1!}{0!}=3.
\]
Indeed, the labeled copies are $z_1^{(1)},z_1^{(2)},z_2^{(1)}$. We can assign two of them to the two factors $z_1+z_2$ in $3\cdot2=6$ ways, and the remaining copy goes to $z_1+z_2+z_3$. Each choice of summands is counted $2!$ times because we can exchange the labels on the two copies of $z_1$. Thus, the coefficient is $6/2!=3$.

On the other hand, $\mfk=(1,0,2)$ satisfies $\wt(\mfk)=3$, $h_{[1,3]}(\mfk)=2$ but $h_{[1,2]}(\mfk)=-1$. Therefore, we have $[z_1z_3^2]P=0$, as stated by the support condition.
\end{example}

\subsection{Log-concavity along integer lines}
We first define the \emph{factorial weight functions} $V,W_a:\ZZ\to\RR_{\geq0}$ by
\begin{equation}\label{eq:VW}
 V(r)=\begin{cases}1/r!,&r\geq0,\\0,&r<0,\end{cases}
 \quad
 W_a(r)=\begin{cases}(r+a)!/r!,&r\geq0,\\0,&r<0,\end{cases}
 \quad(a\in\NN).
\end{equation}
Note that $W_0(r)$ equals $1$ for $r\geq0$ and $0$ otherwise. By Proposition~\ref{prop:coeff}, for every $\mfk\in\ZZ^m$ of total degree $d$, we have
\begin{equation}\label{eq:coeffzero}
 [\mfz^{\mfk}]P
 =\prod_{i=1}^m V(k_i)\prod_{I\in\cL}W_{a_I}(h_I(\mfk)).
\end{equation}
Indeed, $V$ is zero when a coordinate is negative, and $W_{a_I}$ is zero when an admissibility inequality fails.

\begin{lemma}\label{lem:factorials}
For any integers $r,s$ and $a\in\NN$, the sequences
\[
 \{V(r+sq)\}_{q\in\ZZ},\quad \{W_a(r+sq)\}_{q\in\ZZ}
\]
are log-concave and have interval support.
\end{lemma}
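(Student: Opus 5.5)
We first reduce to the case of a fixed sequence $\{F(r+sq)\}_q$ with $F\in\{V,W_a\}$. If $s=0$, the sequence is constant in $q$: either identically zero or a positive constant. In both cases it is trivially log-concave with interval support. So we assume $s\neq0$. Since reversing the index preserves both properties by Lemma~\ref{lem:closures}, we may replace $(r,s)$ by $(r,-s)$ and assume $s\geq1$. The plan is then to show that $F$ itself is log-concave \emph{in a stronger sense along arithmetic progressions}. Concretely, we will check that on its support $\{r\geq0\}$, the function $\log F$ is concave as a function of the integer variable, meaning that its successive differences are nonincreasing. This property is inherited by any arithmetic subsequence $r+sq$.

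Interval support is immediate. Both $V$ and $W_a$ are positive exactly on $r\geq0$. Hence the support of $\{F(r+sq)\}_q$ is $\{q:q\geq -r/s\}$, which is an integer interval (possibly all $q\geq q_{\min}$).

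For the inequality $F(r+sq)^2\geq F(r+s(q-1))F(r+s(q+1))$, set $u=r+sq$. If $u-s<0$, the right-hand side vanishes and there is nothing to prove. Otherwise all three arguments $u-s,u,u+s$ are nonnegative, and we must show
\[
 \log F(u)-\log F(u-s)\geq \log F(u+s)-\log F(u).
\]
Each side telescopes into a sum of $s$ consecutive one-step differences $\delta(t)=\log F(t+1)-\log F(t)$. So it suffices to know that $\delta(t)$ is nonincreasing for $t\geq0$. For $V$ we have $\delta(t)=-\log(t+1)$, which is decreasing. For $W_a$ we have
\[
 \frac{W_a(t+1)}{W_a(t)}=\frac{(t+1+a)!\,t!}{(t+1)!\,(t+a)!}=\frac{t+1+a}{t+1}=1+\frac{a}{t+1},
\]
which is nonincreasing in $t$ since $a\geq0$. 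Pairing the $j$-th step on the left, $\delta(u-s+j)$, with the $j$-th step on the right, $\delta(u+j)$, for $0\leq j<s$ gives the inequality termwise.

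The only point needing care is the boundary of the support, where a zero might appear in the middle of the inequality. This is handled by the case split above. Positivity on $r\geq0$ ensures that whenever the right-hand side is nonzero, all three values are positive, so the logarithms are defined. I do not expect a genuine obstacle beyond this bookkeeping.
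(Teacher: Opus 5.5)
Your proof is correct. For $V$ it is the paper's argument: the paper computes the step ratio $V(r+s(q+1))/V(r+sq)=1/\prod_{\ell=1}^{s}(r+sq+\ell)$ and notes that it is nonincreasing in $q$. That ratio is just the exponential of your telescoped sum $\sum_{j=0}^{s-1}\delta(u+j)$.

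For $W_a$ the two routes differ:
\begin{itemize}
\item The paper writes $W_a(r+sq)=\prod_{\ell=1}^{a}(r+sq+\ell)$ on the support. It observes that each linear factor satisfies $x^2-(x-s)(x+s)=s^2\geq0$ along the progression, and multiplies these $a$ inequalities.
\item You use the one-step ratio $W_a(t+1)/W_a(t)=1+a/(t+1)$ and reuse the same telescoping as for $V$.
\end{itemize}

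Your version isolates a single principle: a nonnegative sequence that is positive exactly on a half-line and has nonincreasing one-step ratios stays log-concave when sampled along any arithmetic progression $r+sq$. Both functions are then handled by one mechanism. Your case split also disposes of the degenerate cases uniformly: $s=0$ for arbitrary $a$, then $s\geq1$ by reversal. The paper's factorization is more concrete. It exhibits $W_a$ along the progression as a product of $a$ linear, hence log-concave, sequences, with an explicit defect $s^2$ per factor.
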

\begin{proof}
Both sequences are positive exactly when $r+sq\geq0$. Since this inequality defines an integer interval, both sequences have interval support. If $s=0$ and $a=0$, they are constant and the assertion is immediate. If $s<0$, we reverse the index $q$. Hence, we may assume that $s>0$ and $a>0$.
For consecutive indices in the support, we have
\[
 \frac{V(r+s(q+1))}{V(r+sq)}
 =\frac1{\prod_{\ell=1}^s(r+sq+\ell)}.
\]
This ratio is nonincreasing in $q$, which implies log-concavity whenever three consecutive terms are positive. For the second sequence, we write
\[
 W_a(r+sq)=\prod_{\ell=1}^a(r+sq+\ell)
\]
on its support. Each factor satisfies
\[
 (r+sq+\ell)^2
 -(r+s(q-1)+\ell)(r+s(q+1)+\ell)=s^2\geq0.
\]
Therefore, multiplying these inequalities proves log-concavity wherever three consecutive terms are positive. 
\end{proof}

\begin{proposition}[Log-concavity along integer lines]\label{prop:lines}
For any $\mfk^0,\mfv\in\ZZ^m$, the sequence
\[
 \{[\mfz^{\mfk^0+q\mfv}]P\}_{q\in\ZZ}
\]
is log-concave and has interval support. If $\mfv\neq\mathbf{0}$, it has finite support and is unimodal.
\end{proposition}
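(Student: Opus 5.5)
The plan is to reduce the sequence to a pointwise product of the one-variable factorial weight sequences from Lemma~\ref{lem:factorials}, using the product form \eqref{eq:coeffzero}, and then to apply the closure property in Lemma~\ref{lem:closures}. The only subtlety is that \eqref{eq:coeffzero} is valid only on vectors of total degree $d$. So I will first split according to how the line $\mfk^0+q\mfv$ meets the hyperplane $\wt(\mfk)=d$.

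\emph{Case $\wt(\mfv)\neq0$.} The quantity $\wt(\mfk^0+q\mfv)=\wt(\mfk^0)+q\,\wt(\mfv)$ equals $d$ for at most one $q$. Since $P$ is homogeneous of degree $d$, the sequence has at most one nonzero term. Such a sequence is trivially log-concave, has interval support (by the convention for zero and singleton sequences), and has finite support.

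\emph{Case $\wt(\mfv)=0$ and $\wt(\mfk^0)\neq d$.} Every term of the line has degree different from $d$, so the sequence is identically zero, and all assertions hold trivially.

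\emph{Case $\wt(\mfv)=0$ and $\wt(\mfk^0)=d$.} Every $\mfk^0+q\mfv$ has degree $d$, so \eqref{eq:coeffzero} applies for all $q\in\ZZ$. Since $h_I$ is affine in $\mfk$, we have
\[
 h_I(\mfk^0+q\mfv)=h_I(\mfk^0)+q\,\wt_I(\mfv).
\]
Hence
\[
 [\mfz^{\mfk^0+q\mfv}]P=\prod_{i=1}^m V(k^0_i+qv_i)\prod_{I\in\cL}W_{a_I}\bigl(h_I(\mfk^0)+q\,\wt_I(\mfv)\bigr).
\]
Each factor has the form $V(r+sq)$ or $W_a(r+sq)$ with integers $r,s$. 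By Lemma~\ref{lem:factorials}, each factor is a nonnegative log-concave sequence with interval support. The pointwise product of finitely many such sequences is again log-concave with interval support by Lemma~\ref{lem:closures}.

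\emph{Finiteness and unimodality for $\mfv\neq\mathbf{0}$.} By Proposition~\ref{prop:coeff}, the support of the sequence lies in the set of $q$ with $\mfk^0+q\mfv\in\Omega$. The set $\Omega\subseteq\{\mfk\in\NN^m:\wt(\mfk)=d\}$ is finite, and the map $q\mapsto\mfk^0+q\mfv$ is injective when $\mfv\neq\mathbf{0}$. So only finitely many $q$ contribute, and the support is finite. Unimodality then follows from Lemma~\ref{lem:ratio}, since the sequence is log-concave with finite interval support.

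The main point to check carefully is that \eqref{eq:coeffzero} holds for every degree-$d$ vector $\mfk$, including those outside $\Omega$ or with negative coordinates, where both sides must vanish. This is exactly why the zero conventions for $V$ and $W_a$ were chosen: $V$ vanishes at negative arguments and $W_{a_I}$ vanishes when an admissibility inequality fails. Granting this, the product representation holds on the entire line, with no case distinctions at the boundary of the support.
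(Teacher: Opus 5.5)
Your proposal is correct and follows the paper's proof essentially step by step. You use the same split according to $\wt(\mfv)$ and $\wt(\mfk^0)$, the product representation \eqref{eq:coeffzero} along a degree-$d$ line, and Lemmas~\ref{lem:factorials}, \ref{lem:closures} and \ref{lem:ratio}. There are only two minor differences. First, you absorb the case $\mfv=\mathbf{0}$ into the general argument, via the $s=0$ case of Lemma~\ref{lem:factorials}, rather than treating it first. Second, you justify finiteness of the support explicitly through finiteness of $\Omega$ and injectivity of $q\mapsto\mfk^0+q\mfv$, which the paper leaves implicit.
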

\begin{proof}
Set $\mfk(q)=\mfk^0+q\mfv$ and $a_q=[\mfz^{\mfk(q)}]P$ for $q\in\ZZ$. If $\mfv=\mathbf{0}$, then $\{a_q\}_{q\in\ZZ}$ is a nonnegative constant sequence. It is therefore log-concave, and its support is either empty or all of $\ZZ$. Then, we may henceforth assume that $\mfv\neq\mathbf{0}$.

Since $P$ is homogeneous of degree $d$, the condition $a_q\neq0$ implies
\begin{equation}\label{eq:linedegree}
 \wt(\mfk(q))=\wt(\mfk^0)+q\wt(\mfv)=d.
\end{equation}
If $\wt(\mfv)\neq0$, this equation has at most one integer solution $q$. If $\wt(\mfv)=0$ but $\wt(\mfk^0)\neq d$, it has no solution. In either case, $\{a_q\}_{q\in\ZZ}$ has at most one nonzero term, so all the assertions hold.
It remains to consider the case
$
 \wt(\mfv)=0, \wt(\mfk^0)=d.
$
Now, every $\mfk(q)$ has total degree $d$, so the coefficient formula \eqref{eq:coeffzero} applies for every integer $q$. We observe that
\[
 k_i(q)=k_i^0+qv_i,\quad
 h_I(\mfk(q))=h_I(\mfk^0)+q\wt_I(\mfv).
\]
Consequently, we obtain that 
\[
 a_q=\prod_{i=1}^m V(k_i^0+qv_i)
     \prod_{I\in\cL}W_{a_I}\bigl(h_I(\mfk^0)+q\wt_I(\mfv)\bigr).
\]
Each factor has the form $V(r+sq)$ or $W_a(r+sq)$ for fixed integers $r,s$ and $a\in\NN$. By Lemma~\ref{lem:factorials}, each is log-concave with interval support as a sequence in $q$. Their pointwise product $\{a_q\}_{q\in\ZZ}$ has the same two properties by Lemma~\ref{lem:closures}.
Since $\mfv\neq\mathbf{0}$, the sequence $\{a_q\}_{q\in\ZZ}$ has finite support and is therefore unimodal by Lemma~\ref{lem:ratio}.
\end{proof}

\begin{example}\label{ex:integer-line}
We take a laminar interval family $\cL=\{[1,2],[1,3]\}$ with $a_{[1,2]}=2$ and $a_{[1,3]}=4$. Then, we obtain
\[
 P=(z_1+z_2)^2(z_1+z_2+z_3)^4.
\]
With $\mfk^0=(0,1,5)$ and $\mfv=(1,0,-1)$, we obtain the integer line
\[
 \mfk(q)=\mfk^0+q\mfv=(q,1,5-q).
\]
Note that $\wt(\mfk(q))=6=d$ for every $q$. Moreover,
\[
 h_{[1,2]}(\mfk(q))=(q+1)-2=q-1,
 \quad h_{[1,3]}(\mfk(q))=6-6=0.
\]
Thus, $\mfk(q)\in\Omega$ exactly when $1\leq q\leq5$. By \eqref{eq:coeffzero}, the coefficient along this line is
\[
 [z_1^qz_2z_3^{5-q}]P
 =V(q)V(1)V(5-q)W_2(q-1)W_4(0)
 =\frac{24q(q+1)}{q!(5-q)!}\quad(1\leq q\leq5),
\]
and it is zero for every other integer $q$. The five nonzero coefficients in order are $2,12,24,20,6$. We can check log-concavity directly from $12^2\geq2\cdot24$, $24^2\geq12\cdot20$, and $20^2\geq24\cdot6$. Since its support is $[1,5]$, the sequence is also unimodal, as asserted in Proposition~\ref{prop:lines}.
\end{example}

\begin{remark}
Note that Proposition~\ref{prop:lines} requires only laminarity, so the members of $\cL$ need not be intervals. We will use the interval assumption later to analyze the combined coefficients under the substitution into fan coordinates.
\end{remark}

\section{Fan expansions and exponent fibers}\label{sec:fan}
In this section, all Laurent expansions are taken with respect to the fan initial cluster $\mfx$ in \eqref{eq:fantri}. We express cluster monomials using laminar interval products and study the associated exponent map and its fibers.
\subsection{The fan expansion formula}
We define
\begin{equation}\label{eq:u}
 u_i=p_{0i}\quad(1\leq i\leq n+2),\quad
 u_1=u_{n+2}=1,\quad u_{i+1}=x_i\quad(i\in[n]).
\end{equation}
We then set
\begin{equation}\label{eq:w}
 w_i=\frac1{u_i u_{i+1}}=\frac1{x_{i-1} x_{i}} \quad(i\in[n+1]).
\end{equation}
Note that the variables $z_i$ in Section~\ref{sec:laminar} are independent, whereas the Laurent monomials $w_i$ satisfy relations. For example,
\[
 \prod_{i\in[n+1],\ i\text{ odd}}w_i
 =\prod_{i\in[n+1],\ i\text{ even}}w_i.
\]
Indeed, $u_1=u_{n+2}=1$, and each remaining $u_j$ occurs exactly once in the denominator on either side. We define the \emph{substitution map} as 
\begin{equation}\label{eq:Phi}
 \Phi:\ZZ[z_1,\ldots,z_{n+1}]\longrightarrow
       \ZZ[x_1^{\pm1},\ldots,x_n^{\pm1}],\quad \Phi(z_i)=w_i.
\end{equation}
Thus, the products of the $z_i$ over odd and even indices have the same image under $\Phi$. Whenever distinct monomials occur in $P$ and have the same image, their coefficients add under the substitution.

\begin{lemma}[Fan expansion]\label{lem:fanexp}
For $1\leq a<b\leq n+2$, the Laurent expansion of $p_{ab}$ in the fixed fan initial cluster is
\begin{equation}\label{eq:fanexp}
 p_{ab}=u_a u_b\sum_{i=a}^{b-1}w_i.
\end{equation}
\end{lemma}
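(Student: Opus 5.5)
We will prove \eqref{eq:fanexp} by induction on $b-a$, using only the Ptolemy relation \eqref{eq:ptolemy} and the conventions $u_1=u_{n+2}=1$, $p_{ab}=1$ for boundary edges.

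For the base case $b=a+1$, the pair $(a,b)$ is a boundary edge. The right-hand side is $u_au_{a+1}w_a=1$ by \eqref{eq:w}, so it agrees with $p_{a,a+1}=1$. We also note the cases $a=1$ or $b=n+2$, where $p_{ab}=p_{0b}$ or $p_{0a}$ may be an initial variable. For instance, with $a=1$ the right-hand side telescopes only after the induction, so we treat all $1\le a<b\le n+2$ uniformly rather than as separate base cases.

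For the inductive step with $b-a\geq2$, apply \eqref{eq:ptolemy} to the four vertices $0<a<b-1<b$, which are in cyclic order:
\[
 p_{0,b-1}\,p_{ab}=p_{0a}\,p_{b-1,b}+p_{0b}\,p_{a,b-1}.
\]
Here $p_{b-1,b}=1$ is a boundary edge, $p_{0a}=u_a$, $p_{0b}=u_b$ and $p_{0,b-1}=u_{b-1}$. These identifications need a small check at the boundary: $p_{01}$ and $p_{0,n+2}$ are boundary edges equal to $1$, which matches $u_1=u_{n+2}=1$. Moreover $u_{b-1}$ is a genuine initial variable, hence nonzero, since $2\le b-1\le n+1$ when $a\geq1$ and $b-a\geq2$. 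This gives
\[
 p_{ab}=\frac{u_a+u_b\,p_{a,b-1}}{u_{b-1}}.
\]

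Substituting the induction hypothesis $p_{a,b-1}=u_au_{b-1}\sum_{i=a}^{b-2}w_i$ yields
\[
 p_{ab}=\frac{u_a}{u_{b-1}}+u_au_b\sum_{i=a}^{b-2}w_i .
\]
Since $u_au_bw_{b-1}=u_au_b/(u_{b-1}u_b)=u_a/u_{b-1}$, the first term is exactly the missing summand $i=b-1$, and this proves \eqref{eq:fanexp}.

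The only delicate point is the degenerate Ptolemy configuration when $a=1$, where vertex $1$ is adjacent to $0$. There \eqref{eq:ptolemy} still holds, because it holds for any four cyclically ordered vertices with boundary edges set to $1$, as stated in Proposition~\ref{prop:polygon}. Also, when $b-1=a+1$ the factor $p_{a,b-1}$ is a boundary edge, and this is already covered by the base case. We expect no real obstacle beyond this bookkeeping.
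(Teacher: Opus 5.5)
Your proof is correct and follows the paper's argument: induction on $b-a$, the Ptolemy relation on $0,a,b-1,b$ giving $u_{b-1}p_{ab}=u_a+u_b p_{a,b-1}$, and identifying $u_a/u_{b-1}=u_a u_b w_{b-1}$ as the missing summand $i=b-1$. One aside is inaccurate but never used: since $a\geq1$, the variable $p_{ab}$ never involves vertex $0$, so it never equals $p_{0b}$ or an initial variable, even when $a=1$ or $b=n+2$.
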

\begin{proof}
We prove the statement by induction on $b-a$. If $b=a+1$, both sides of \eqref{eq:fanexp} equal $1$. For $b\geq a+2$, we apply the Ptolemy relation to the cyclically ordered vertices $0,a,b-1,b$. Since $p_{b-1,b}=1$, we obtain
\[
 u_{b-1}p_{ab}=u_a+u_b p_{a,b-1}.
\]
Then, dividing by $u_a u_b u_{b-1}$ and using the induction hypothesis, we have
\[
 \frac{p_{ab}}{u_a u_b}
 =w_{b-1}+\frac{p_{a,b-1}}{u_a u_{b-1}}
 =w_{b-1}+\sum_{i=a}^{b-2}w_i
 =\sum_{i=a}^{b-1}w_i.
\]
This proves \eqref{eq:fanexp}.
\end{proof}

\begin{example}\label{ex:fan-p46}
We check the fan expansion formula in type $A_4$ for a single flip and recover the usual exchange relation. In the fan of Figure~\ref{fig:a4-fan-quiver}, we flip the diagonal $(0,5)$ to $(4,6)$. Here $x_4=p_{05}$, and we have $u_4=x_3$, $u_6=1$, $w_4=1/(x_3x_4)$, and $w_5=1/x_4$. By Lemma~\ref{lem:fanexp}, we have
\[
 p_{46}=u_4u_6(w_4+w_5)
 =x_3\left(\frac1{x_3x_4}+\frac1{x_4}\right)
 =\frac{1+x_3}{x_4}.
\]
Hence, $x_4p_{46}=1+x_3$, in agreement with the Ptolemy relation for the quadrilateral with vertices $0,4,5,6$.
\end{example}

\subsection{Compatible diagonals and laminar products}
We now show how compatibility of diagonals gives rise to laminar interval products in the fan expansions of cluster monomials. The following combinatorial observation is independent of the choice of initial triangulation.
\begin{lemma}\label{lem:laminararcs}
For a compatible family of diagonals $(a,b)$ with $1\leq a<b\leq n+2$, the intervals
\[
 I_{ab}=[a,b-1]\subseteq[n+1]
\]
form a laminar interval family.
\end{lemma}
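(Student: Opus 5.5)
The plan is to reduce laminarity to a pairwise statement about two compatible diagonals $(a,b)$ and $(c,d)$ with $1\leq a<b\leq n+2$ and $1\leq c<d\leq n+2$. First I would check that each $I_{ab}=[a,b-1]$ is a nonempty integer interval in $[n+1]$. Since $a<b$, we have $a\leq b-1$. Moreover $a\geq1$ and $b-1\leq n+1$, so $I_{ab}\subseteq[n+1]$. Distinct diagonals give distinct intervals because $(a,b)\mapsto[a,b-1]$ is injective. It therefore suffices to show that for two distinct compatible diagonals the intervals $[a,b-1]$ and $[c,d-1]$ are nested or disjoint.

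Next I would translate compatibility into the noncrossing condition on the endpoints. All vertices $1,\ldots,n+2$ lie in cyclic order on the convex polygon, and vertex $0$ lies outside this range. For labels in this linear order, two chords $(a,b)$ and $(c,d)$ cross in their interiors exactly when their endpoints interlace strictly, that is, when $a<c<b<d$ or $c<a<d<b$. Compatibility therefore excludes strict interlacing. Without loss of generality take $a\leq c$, which leaves four cases.

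\begin{enumerate}
\item If $d\leq b$, then $[c,d-1]\subseteq[a,b-1]$.
\item If $b\leq c$, then $b-1<c$, so the intervals are disjoint. This is where the shift to $b-1$ matters: diagonals sharing the endpoint $b=c$ give $[a,b-1]$ and $[b,d-1]$, which are still disjoint.
\item If $a=c$ and $d>b$, then $[a,b-1]\subseteq[c,d-1]$.
\item If $a<c<b<d$, the endpoints interlace strictly, so this case is excluded by compatibility.
\end{enumerate}

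The only point requiring care is justifying that compatibility of diagonals is exactly the absence of strict interlacing. This includes the shared-endpoint cases, where the chords meet only at a vertex and so are compatible by Definition~\ref{def:compatible}. It is a standard fact for points in convex position, and I would state it briefly with that justification. I do not expect a real obstacle.
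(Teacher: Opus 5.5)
Your proposal is correct and follows essentially the same argument as the paper. You assume $a\leq c$, obtain disjointness when $b\leq c$ and nesting when $a=c$ or $d\leq b$, and exclude the remaining configuration $a<c<b<d$ because those diagonals would cross; only this direction of your interlacing criterion (interlacing implies crossing) is actually needed.
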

\begin{proof}
Choose two distinct diagonals $(a,b)$ and $(c,d)$ from the family, and assume $a\leq c$. If $a=c$, their intervals have the same left endpoint and are nested.
Assume now that $a<c$. If $b\leq c$, then $I_{ab}\cap I_{cd}=\varnothing$. Otherwise, $a<c<b$. If $d>b$, the endpoints satisfy $a<c<b<d$, so the diagonals cross, contradicting compatibility. Therefore, $d\leq b$ and $I_{cd}\subseteq I_{ab}$. Thus, any two associated intervals are disjoint or nested, proving laminarity.
\end{proof}

\begin{proposition}\label{prop:reduction}
For every cluster monomial $M$, there exist $\mfeta\in\NN^n$ and a homogeneous polynomial $P$ of the form \eqref{eq:P}, indexed by a laminar interval family in $[n+1]$, such that
\begin{equation}\label{eq:reduction}
 M=\mfx^{\mfeta}\Phi(P).
\end{equation}
\end{proposition}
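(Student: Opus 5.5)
The plan is to combine the polygon model with the fan expansion formula. First I will write $M$ as in \eqref{eq:polygonmonomial}, $M=\prod_{(a,b)\in\cT}p_{ab}^{r_{ab}}$, for some triangulation $\cT$. The vertex labels are $0,1,\ldots,n+2$.

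The diagonals in $\cT$ fall into two kinds. The first kind is incident with vertex $0$, namely $(0,j)$ with $2\leq j\leq n+1$. These are initial variables, $p_{0j}=x_{j-1}$. The second kind has both endpoints in $\{1,\ldots,n+2\}$; I write them as $(a,b)$ with $1\leq a<b\leq n+2$. Because they are diagonals and not boundary edges, we have $b\geq a+2$, and $(a,b)\neq(1,n+2)$ since $1$ and $n+2$ are both adjacent to $0$ but not to each other. So $(1,n+2)$ is in fact a diagonal and is allowed; it gives the interval $[1,n+1]=[n+1]$, which is harmless.

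Next, for each diagonal of the second kind I will apply Lemma~\ref{lem:fanexp}:
\[
 p_{ab}=u_au_b\,\Phi\Bigl(\sum_{i\in I_{ab}}z_i\Bigr),\qquad I_{ab}=[a,b-1]\subseteq[n+1].
\]
Multiplying these gives
\[
 M=\prod_{(0,j)\in\cT}x_{j-1}^{r_{0j}}\cdot\prod_{(a,b)}(u_au_b)^{r_{ab}}\cdot\Phi(P),
 \quad P=\prod_{(a,b)}\Bigl(\sum_{i\in I_{ab}}z_i\Bigr)^{r_{ab}}.
\]
Here I use that $\Phi$ is a ring homomorphism.

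Each $u_a$ is either $1$ or some $x_{a-1}$, so the prefactor is a monomial $\mfx^{\mfeta}$ with $\mfeta\in\NN^n$, obtained by collecting exponents. The intervals $I_{ab}$ are pairwise distinct, since $(a,b)$ is determined by $[a,b-1]$. They come from a compatible family, so by Lemma~\ref{lem:laminararcs} they form a laminar interval family $\cL$. Setting $a_{I_{ab}}=r_{ab}$ puts $P$ in the form \eqref{eq:P}, and $P$ is homogeneous because each factor is linear.

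There is no real obstacle here. The only care needed is the bookkeeping: separating the diagonals through vertex $0$, which contribute directly to $\mfeta$, from the rest, and checking that the convention $u_1=u_{n+2}=1$ keeps $\mfeta$ nonnegative and integral. If $\cT$ has no diagonal of the second kind, then $P=1$ and $\cL$ is empty (or consists of $[n+1]$ with exponent $0$), and the claim is trivial.
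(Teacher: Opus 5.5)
Your proposal is correct and matches the paper's proof: you separate the diagonals through vertex $0$, expand each remaining $p_{ab}$ by Lemma~\ref{lem:fanexp}, collect the factors $u_au_b$ into $\mfx^{\mfeta}$, and get laminarity from Lemma~\ref{lem:laminararcs}. One wording fix: delete the clause claiming $(a,b)\neq(1,n+2)$, since, as you go on to say, $(1,n+2)$ is a genuine diagonal and contributes the interval $[n+1]$.
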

\begin{proof}
We write $M=\prod_{(a,b)\in\cT}p_{ab}^{r_{ab}}$ as in \eqref{eq:polygonmonomial}, where $r_{ab}\in\NN$ is the exponent of $p_{ab}$, and set $r_{ab}=0$ for diagonals outside $\cT$. In particular, $r_{0,i+1}$ is the exponent of $x_i=p_{0,i+1}$ in this product, taken to be zero if $x_i$ does not occur. By Lemma~\ref{lem:fanexp}, for each diagonal $(a,b)\in\cT$ with $a\geq1$ we have
\[
 p_{ab}^{r_{ab}}=(u_a u_b)^{r_{ab}}
               \left(\sum_{i=a}^{b-1}w_i\right)^{r_{ab}}.
\]
We therefore define
\[
 P(\mfz)=\prod_{(a,b)\in\cT,\ a\geq1}
       \left(\sum_{i=a}^{b-1}z_i\right)^{r_{ab}}.
\]
By definition, $\Phi(P)$ is obtained from $P$ by substituting $w_i$ for $z_i$. The remaining factors are powers of the initial variables and of the $u_a u_b$. By \eqref{eq:u}, their product is $\mfx^{\mfeta}$, where
\[
 \eta_i=r_{0,i+1}
   +\sum_{(a,b)\in\cT,\ a\geq1,\ i+1\in\{a,b\}}r_{ab}.
\]
Note that the boundary values $u_1=u_{n+2}=1$ contribute no initial variables. Hence, $\mfeta\in\NN^n$ and $M=\mfx^{\mfeta}\Phi(P)$.
By Lemma~\ref{lem:laminararcs}, the intervals $I_{ab}=[a,b-1]$ indexing $P$ form a laminar family $\mcL$. After adding $[n+1]$ with exponent zero if necessary, $P$ has the form \eqref{eq:P}. This includes $P=1$ when $M$ involves only initial variables.
\end{proof}

\subsection{The exponent map and its fibers}
We define the \emph{exponent map} $L:\ZZ^{n+1}\to\ZZ^n$ by
\begin{equation}\label{eq:L}
 L(\mfk)=(k_1+k_2,\ldots,k_n+k_{n+1})\in\ZZ^n.
\end{equation}
For $\mfk=(k_1,\dots,k_{n+1})\in\NN^{n+1}$, the definition of $\Phi$ implies
\begin{equation}\label{eq:monomialmap}
 \begin{aligned}
 \Phi(\mfz^{\mfk})
 &=\prod_{j=1}^{n+1}w_j^{k_j}
  =\prod_{j=1}^{n+1}(u_j u_{j+1})^{-k_j}\\
 &=\prod_{i=1}^n x_i^{-(k_i+k_{i+1})}
  =\mfx^{-L(\mfk)}.
 \end{aligned}
\end{equation}
The \emph{integer fiber of $L$ over $\mfb\in\ZZ^n$} is the set
\[
 \{\mfk\in\ZZ^{n+1}:L(\mfk)=\mfb\}.
\]
By \eqref{eq:monomialmap}, every nonnegative vector in this fiber corresponds to a monomial that maps to $\mfx^{-\mfb}$. The fiber may also contain vectors with negative coordinates, but these cannot be exponent vectors of monomials in $P$.

We now fix $P$ as in Proposition~\ref{prop:reduction}, with $d$ and $\Omega$ as in \eqref{eq:P} and \eqref{eq:Omega}. We write
\[
 c(\mfk)=[\mfz^{\mfk}]P\quad(\mfk\in\ZZ^{n+1}),\quad
 C(\mfb)=[\mfx^{-\mfb}]\Phi(P)\quad(\mfb\in\ZZ^n).
\]
We define the \emph{admissible fiber over $\mfb$} by
\[
 \Omega_{\mfb}=\{\mfk\in\Omega:L(\mfk)=\mfb\}.
\]
By Proposition~\ref{prop:coeff}, $\Omega_{\mfb}$ consists exactly of the exponent vectors in $P$ whose monomials have image $\mfx^{-\mfb}$. In particular, it is finite. Hence, by \eqref{eq:monomialmap}, we obtain
\begin{equation}\label{eq:fibersum}
 C(\mfb)=\sum_{\mfk\in\Omega_{\mfb}}c(\mfk).
\end{equation}
All summands are nonnegative, so $C(\mfb)\geq0$. Moreover, for $M=\mfx^{\mfeta}\Phi(P)$, we obtain
\begin{equation}\label{eq:shiftC}
 [\mfx^{\mfalpha}]M=C(\mfeta-\mfalpha).
\end{equation}
Thus, each coordinate slice of $M$ is a reversal and shift of a coordinate slice of $C$. By Lemma~\ref{lem:closures}, these operations preserve log-concavity and interval support, so it suffices to prove these properties for every coordinate slice of $C$.

\begin{lemma}\label{lem:fibereq}
Let $\mfb\in\ZZ^n$. Every integer solution of $L(\mfk)=\mfb$ is uniquely determined by $t=k_1\in\ZZ$ and is given by
\begin{equation}\label{eq:recsolution}
 k_i=(-1)^{i-1}t+\gamma_i(\mfb),\quad
 \gamma_i(\mfb)=\sum_{\ell=1}^{i-1}(-1)^{i-1-\ell}b_\ell
 \quad(1\leq i\leq n+1).
\end{equation}
Here $\gamma_1=0$, and the total degree of the solution is
\begin{equation}\label{eq:paritydegree}
 \wt(\mfk)=\begin{cases}
 t+b_2+b_4+\cdots+b_n,&n\text{ even},\\
 b_1+b_3+\cdots+b_n,&n\text{ odd}.
 \end{cases}
\end{equation}
\end{lemma}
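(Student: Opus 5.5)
The plan is to solve the linear recurrence defined by $L$ directly and then sum the resulting coordinates. The equations $L(\mfk)=\mfb$ read $k_i+k_{i+1}=b_i$ for $i\in[n]$, that is,
\[
 k_{i+1}=b_i-k_i\quad(1\leq i\leq n).
\]
Once $t=k_1$ is fixed, this recursion determines $k_2,\ldots,k_{n+1}$ successively. Conversely, any $t\in\ZZ$ produces an integer solution. This gives existence and uniqueness of the parametrization by $t$.

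Next I will verify \eqref{eq:recsolution} by induction on $i$. The case $i=1$ holds with $\gamma_1=0$. For the inductive step, suppose $k_i=(-1)^{i-1}t+\gamma_i(\mfb)$. Then
\[
 k_{i+1}=b_i-k_i=(-1)^{i}t+b_i-\gamma_i(\mfb),
\]
and a direct check shows
\[
 b_i-\sum_{\ell=1}^{i-1}(-1)^{i-1-\ell}b_\ell=\sum_{\ell=1}^{i}(-1)^{i-\ell}b_\ell=\gamma_{i+1}(\mfb).
\]
This closes the induction.

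For the degree formula, I will avoid summing the $\gamma_i$ and instead pair consecutive coordinates. This is the only point that needs care, and the parity split arises naturally here.

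When $n$ is odd, $n+1$ is even. The coordinates then split into the pairs $(k_1,k_2),(k_3,k_4),\ldots,(k_n,k_{n+1})$, each summing to $b_1,b_3,\ldots,b_n$ respectively. Hence $\wt(\mfk)=b_1+b_3+\cdots+b_n$, which is independent of $t$.

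When $n$ is even, $k_1=t$ is left unpaired. The remaining coordinates form the pairs $(k_2,k_3),\ldots,(k_n,k_{n+1})$, with sums $b_2,b_4,\ldots,b_n$. This yields $\wt(\mfk)=t+b_2+\cdots+b_n$.

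No step is a real obstacle. The pairing argument replaces any explicit manipulation of alternating sums, so the only thing to check carefully is the index bookkeeping in the two parity cases.
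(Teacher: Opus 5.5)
Your proposal is correct and follows essentially the same route as the paper: solve the recursion $k_{i+1}=b_i-k_i$ from $k_1=t$, verify the closed form by induction, and obtain the degree formula by pairing consecutive coordinates according to the parity of $n$. Your explicit check that $b_i-\gamma_i(\mfb)=\gamma_{i+1}(\mfb)$ simply spells out the induction step that the paper leaves implicit.
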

\begin{proof}
We solve the equations recursively as $k_{i+1}=b_i-k_i$, starting from $k_1=t$. Thus, fixing $t$ uniquely determines all coordinates $k_i$. By induction, we obtain \eqref{eq:recsolution}, which gives a solution for every $t\in\ZZ$. If $n$ is even, we group the entries to obtain
\[
 \wt(\mfk)=k_1+(k_2+k_3)+\cdots+(k_n+k_{n+1})
       =t+b_2+b_4+\cdots+b_n.
\]
If $n$ is odd, we then have
\[
 \wt(\mfk)=(k_1+k_2)+(k_3+k_4)+\cdots+(k_n+k_{n+1})
       =b_1+b_3+\cdots+b_n.
\]
This proves \eqref{eq:paritydegree}.
\end{proof}

\begin{example}\label{ex:odd-fiber}
We illustrate how two distinct monomials of $P$ contribute to the same Laurent coefficient of $\Phi(P)$. We take $n=3$ with fan initial cluster variables $x_i=p_{0,i+1}$ for $1\leq i\leq3$. By Lemma~\ref{lem:fanexp}, we have $p_{15}=w_1+w_2+w_3+w_4$. Hence, the cluster monomial $M=p_{15}^2$ equals $\Phi(P)$ for $P=(z_1+z_2+z_3+z_4)^2$. Here $d=2$ and $\cL=\{[1,4]\}$, so $\Omega$ consists of all nonnegative vectors of total degree $2$.

By Lemma~\ref{lem:fibereq}, all integer solutions of $L(\mfk)=(1,1,1)$ are of the form
\[
 \mfk(t)=(t,1-t,t,1-t).
\]
Every solution satisfies $\wt(\mfk(t))=2$, and its coordinates are nonnegative if and only if $0\leq t\leq1$. Therefore, the admissible fiber consists of $\mfk(0)=(0,1,0,1)$ and $\mfk(1)=(1,0,1,0)$. The corresponding coefficients are $[z_2z_4]P=[z_1z_3]P=2$. Moreover,
\[
 \Phi(z_2z_4)=w_2w_4=\frac1{x_1x_2x_3}=w_1w_3=\Phi(z_1z_3).
\]
Therefore, by \eqref{eq:fibersum}, we have
\[
 [x_1^{-1}x_2^{-1}x_3^{-1}]\Phi(P)=C(1,1,1)=2+2=4.
\]
\end{example}

\section{Proof of the main theorem}\label{sec:mainproof}
We now prove Theorem~\ref{thm:main} by considering one coordinate slice at a time. We first distinguish between even and odd $n$. When $n$ is odd, we treat odd and even $j$ separately.
The sketch of proof is as follows.
\begin{itemize}
\item[$\bullet$] If $n$ is even, each Laurent coefficient comes from at most one monomial of the laminar product $P$. We therefore apply log-concavity along integer lines from Proposition~\ref{prop:lines}.
\item[$\bullet$] If $n$ is odd and $j$ is odd, the total degree condition forces the slice to have at most one nonzero term.
\item[$\bullet$] If $n$ is odd and $j$ is even, several coefficients of $P$ may contribute to the same Laurent coefficient. We express their sum as a convolution multiplied by a log-concave factor.
\end{itemize}
In each case, we establish log-concavity and interval support. Since every slice has finite support, unimodality then follows.

\begin{proof}[Proof of Theorem~\ref{thm:main}]
Let $M$ be any cluster monomial. By Proposition~\ref{prop:reduction}, we can write
\[
 M=\mfx^{\mfeta}\Phi(P),
\]
where $P$ is a laminar interval product in $n+1$ variables. We keep the notation $d,\cL,\Omega$ from Section~\ref{sec:laminar} and write
\[
 c(\mfk)=[\mfz^{\mfk}]P,
 \quad C(\mfb)=[\mfx^{-\mfb}]\Phi(P).
\]
By \eqref{eq:shiftC}, each coordinate slice of $M$ is a shift and reversal of a coordinate slice of $C$. These operations preserve log-concavity and interval support by Lemma~\ref{lem:closures}. Hence, it is enough to prove these two properties for every coordinate slice of $C$. All such slices have finite support, since $\Phi(P)$ is a Laurent polynomial, so unimodality will then follow from Lemma~\ref{lem:ratio}.

To study an arbitrary coordinate slice of $C$, we fix a direction $j\in[n]$ and integers $b_i$ for all $i\neq j$. For $q\in\ZZ$, we set $\mfb(q)=(b_1(q),\dots,b_n(q))$, where 
\begin{equation}\label{eq:bq}
 b_j(q)=q,\quad b_i(q)=b_i\ (i\neq j),\quad
 C_q=C(\mfb(q))
 =\sum_{\mfk\in\Omega,\ L(\mfk)=\mfb(q)}c(\mfk).
\end{equation}
We consider three cases according to the parities of $n$ and $j$.

\medskip\noindent\emph{Case 1. $n$ is even.}
For each $q$, we first solve $L(\mfk)=\mfb(q)$ together with $\wt(\mfk)=d$. Equivalently, it implies that 
\[
 k_i+k_{i+1}=b_i(q)\quad(1\leq i\leq n),
 \quad k_1+\cdots+k_{n+1}=d.
\]
Since $n$ is even, we leave $k_1$ aside and group the remaining coordinates in pairs. Then,
\[
 \begin{aligned}
 d&=k_1+(k_2+k_3)+(k_4+k_5)+\cdots+(k_n+k_{n+1})\\
  &=k_1+b_2(q)+b_4(q)+\cdots+b_n(q).
 \end{aligned}
\]
We write $k_i=k_i(q)$ for each coordinate to make its dependence on $q$ explicit. Thus,
\begin{equation}\label{eq:evenfirst}
 k_1(q)=d-\sum_{1\leq i\leq n,\ i\text{ even}}b_i(q).
\end{equation}
Starting with this value, we determine the remaining coordinates recursively by
\[
 k_{i+1}(q)=b_i(q)-k_i(q)\quad(1\leq i\leq n).
\]
This gives an integer vector satisfying $L(\mfk(q))=\mfb(q)$. Pairing its coordinates as above gives $\wt(\mfk(q))=d$. Moreover, the first coordinate is forced by \eqref{eq:evenfirst}, and the recursion determines every subsequent coordinate. Hence, this solution is unique.

Note that each $b_i(q)$ is either the fixed integer $b_i$ or $q$. By \eqref{eq:evenfirst}, we can write $k_1(q)$ as a constant plus an integer multiple of $q$. The recursion preserves this form, since it subtracts two expressions of the same form. By induction, every coordinate $k_i(q)$ is a constant plus an integer multiple of $q$. Collecting the constant terms and the coefficients of $q$, we can therefore write
\[
 \mfk(q)=\mfk^0+q\mfv
 \quad\text{with }\mfk^0,\mfv\in\ZZ^{n+1}.
\]
By uniqueness, the sum in \eqref{eq:fibersum} has at most one term, corresponding to $\mfk(q)$. If this vector is not admissible, the sum is empty and $c(\mfk(q))=0$ by Proposition~\ref{prop:coeff}. Thus, in all cases,
\[
 C_q=c(\mfk^0+q\mfv)\quad(q\in\ZZ).
\]
By Proposition~\ref{prop:lines}, we conclude that $\{C_q\}_{q\in\ZZ}$ is log-concave with interval support.

\medskip\noindent\emph{Case 2. $n$ is odd and $j$ is odd.}
If $C_q\neq0$, at least one admissible vector contributes to the sum in \eqref{eq:bq}. This vector satisfies $L(\mfk)=\mfb(q)$. By \eqref{eq:paritydegree}, we must have
\[
 d=b_1(q)+b_3(q)+\cdots+b_n(q)
   =q+\sum_{i\in[n],\ i\neq j,\ i\text{ odd}}b_i.
\]
The right side determines a unique value of $q$. Hence, $\{C_q\}_{q\in\ZZ}$ has at most one nonzero term and is therefore log-concave with interval support. Note that this case includes $n=1$.

\medskip\noindent\emph{Case 3. $n$ is odd and $j$ is even.}
Here $n+1$ is even and $2\leq j\leq n-1$. By \eqref{eq:paritydegree}, every solution of $L(\mfk)=\mfb(q)$ satisfies $\wt(\mfk)=b_1+b_3+\cdots+b_n$, which is independent of $q$ since $j$ is even. Thus, $\wt(\mfk)=d$ if and only if
\begin{equation}\label{eq:oddcompatibility}
 b_1+b_3+\cdots+b_n=d.
\end{equation}
If this equality fails, then $C_q=0$ for every $q$, and there is nothing to prove. We therefore assume that it holds. Our aim is to write $C_q$ as a convolution multiplied by a log-concave factor. We do this in four steps.

\smallskip\noindent\emph{Step 1. Express the coefficient as a sum over $t=k_1$.}
We set $c_i=\gamma_i(\mfb(0))$, using the notation in \eqref{eq:recsolution}. These are fixed integers. By the same formula, we have
\[
 \gamma_i(\mfb(q))=\sum_{\ell=1}^{i-1}(-1)^{i-1-\ell}b_\ell(q).
\]
If $i\leq j$, the sum does not include $b_j(q)=q$, since its indices satisfy $\ell\leq i-1<j$. All coordinates appearing in the sum are therefore fixed, so $\gamma_i(\mfb(q))=c_i$. If $i>j$, the sum includes $b_j(q)=q$, and this is its only term that depends on $q$. The coefficient of $q$ is $(-1)^{i-1-j}=(-1)^{i-1}$, since $j$ is even. Hence, $\gamma_i(\mfb(q))=c_i+(-1)^{i-1}q$. Substituting these expressions into \eqref{eq:recsolution}, we obtain all integer solutions in the form
\begin{equation}\label{eq:oddk}
 k_i(q,t)=\begin{cases}
 \eps_i t+c_i,&i\leq j,\\
 \eps_i(t+q)+c_i,&i>j,
 \end{cases}
 \quad \eps_i=(-1)^{i-1},\quad t\in\ZZ.
\end{equation}
Note that $c_1=0$ and $c_2=b_1$. Each solution satisfies $\wt(\mfk(q,t))=d$, and different values of $t=k_1$ give different solutions. We may therefore apply \eqref{eq:coeffzero} to every solution. If a vector is not admissible, at least one factor in that formula is zero. Thus, \eqref{eq:fibersum} becomes
\begin{equation}\label{eq:oddcoeffsum}
 C_q=\sum_{t\in\ZZ}
       \prod_{i=1}^{n+1} V(k_i(q,t))
       \prod_{I\in\cL}W_{a_I}(h_I(\mfk(q,t))).
\end{equation}
This sum is finite. Indeed, a nonzero summand requires $k_1=t\geq0$ and $k_2=b_1-t\geq0$, so only integers $0\leq t\leq b_1$ can contribute.

\smallskip\noindent\emph{Step 2. Determine which variable each factor depends on.}
We set $u=t+q$. By \eqref{eq:oddk}, the coordinates with $i\leq j$ depend only on $t$, while those with $i>j$ depend only on $u$. We next show that each $h_I(\mfk(q,t))$ is constant or depends on just one of $t,u,q$.

Let $I=[a,b]\in\cL$, where $1\leq a\leq b\leq n+1$. If $I\subseteq[1,j]$, then
\[
 \wt_I(\mfk(q,t))
 =\left(\sum_{i=a}^b\eps_i\right)t+\sum_{i=a}^b c_i.
\]
The signs $\eps_i$ alternate between $1$ and $-1$, so consecutive pairs cancel. Their sum is therefore $0$, $1$, or $-1$. Hence, $\wt_I$ is constant or a constant plus $t$ or $-t$. If $I\subseteq[j+1,n+1]$, the same argument applies with $u$ in place of $t$.

If $I$ is contained in neither $[1,j]$ nor $[j+1,n+1]$, its endpoints satisfy $a\leq j<b$. We split the interval into $[a,j]$ and $[j+1,b]$. By \eqref{eq:oddk}, we have
\[
 \wt_I(\mfk(q,t))
 =\left(\sum_{i=a}^j\eps_i\right)t
  +\left(\sum_{i=j+1}^b\eps_i\right)u
  +\sum_{i=a}^b c_i.
\]
We compute the two alternating sums separately. Since $j$ is even, the terms in $\sum_{i=a}^j\eps_i$ cancel in pairs if $a$ is odd. If $a$ is even, the terms from $a+1$ to $j$ cancel, leaving only $\eps_a=-1$. Thus,
\[
 \left(\sum_{i=a}^j\eps_i\right)t
 =\begin{cases}
 0,&a\text{ odd},\\
 -t,&a\text{ even}.
 \end{cases}
\]
Since $j+1$ is odd, the terms in the second sum cancel in pairs if $b$ is even. If $b$ is odd, the cancellation leaves only $\eps_b=1$. Hence,
\[
 \left(\sum_{i=j+1}^b\eps_i\right)u
 =\begin{cases}
 u,&b\text{ odd},\\
 0,&b\text{ even}.
 \end{cases}
\]
Combining these two calculations, we obtain the values of $\wt_I(\mfk(q,t))-\sum_{i\in I}c_i$ in the following table.
\begin{equation}\label{eq:paritytable}
 \begin{array}{c|cc}
  &b\text{ odd}&b\text{ even}\\ \hline
  a\text{ odd}&u&0\\
  a\text{ even}&u-t=q&-t
 \end{array}
\end{equation}
Subtracting the constant $D_I$ does not change which variable appears. This proves the claimed description of $h_I$.

We now group the intervals according to that description, writing
\[
 \cL=\cL_0\sqcup\cL_t\sqcup\cL_u\sqcup\cL_q.
\]
Here $\cL_0$ contains the intervals for which $h_I$ is constant, and the other three groups contain those for which $h_I$ depends on $t,u,q$, respectively. For each $I\in\cL$, we set
\[
 r_I=\sum_{i\in I}c_i-D_I.
\]
This is a fixed integer independent of $q$ and $t$. For each $I=[a,b]\notin\cL_0$, we define
\[
 s_I=\eps_b=(-1)^{b-1}
 =\begin{cases}
 1,&b\text{ odd},\\
 -1,&b\text{ even}.
 \end{cases}
\]
By the preceding calculations, $s_I$ is the coefficient of $t$, $u$, or $q$ in the corresponding case. Since $h_I=\wt_I-D_I$, we can therefore write
\begin{equation}\label{eq:partitionh}
 h_I(\mfk(q,t))=\begin{cases}
 r_I,&I\in\cL_0,\\
 r_I+s_I t,&I\in\cL_t,\\
 r_I+s_I u,&I\in\cL_u,\\
 r_I+s_I q,&I\in\cL_q.
 \end{cases}
\end{equation}

\smallskip\noindent\emph{Step 3. Collect the factors into three sequences.}
We put all factors depending on $t$ into $A(t)$, all factors depending on $u$ into $B(u)$, and the remaining factors into $H(q)$. More precisely, we define
\begin{align}
 A(t)&=\prod_{i=1}^{j}V(\eps_i t+c_i)
        \prod_{I\in\cL_t}W_{a_I}(r_I+s_I t),\label{eq:A}\\
 B(u)&=\prod_{i=j+1}^{n+1}V(\eps_i u+c_i)
        \prod_{I\in\cL_u}W_{a_I}(r_I+s_I u),\label{eq:B}\\
 H(q)&=\left(\prod_{I\in\cL_0}W_{a_I}(r_I)\right)
        \prod_{I\in\cL_q}W_{a_I}(r_I+s_I q).\label{eq:H}
\end{align}
Each factor is a function $V$ or $W_a$ evaluated at a constant plus an integer multiple of its variable. Therefore, Lemma~\ref{lem:factorials} applies to every factor, and Lemma~\ref{lem:closures} shows that the sequences $A(t)$, $B(u)$, and $H(q)$ are log-concave with interval support.

We now show that $A(t)$ and $B(u)$ can be nonzero for only finitely many integers $t$ and $u$. 
For $A(t)$, we consider the factors with indices $1$ and $2$, both of which occur in \eqref{eq:A} because $j\geq2$. Since $c_1=0$, $c_2=b_1$, $\eps_1=1$, and $\eps_2=-1$, these factors are
$
 V(t)$ and $V(b_1-t).
$
If $A(t)>0$, both factors must be positive. Hence, $t\geq0$ and $b_1-t\geq0$, giving
\[
 \supp(A)\subseteq[0,b_1].
\]
For $B(u)$, the factors with indices $j+1$ and $j+2$ both occur in \eqref{eq:B}, since $j\leq n-1$. As $j$ is even, we have $\eps_{j+1}=1$ and $\eps_{j+2}=-1$. Thus, these two factors are
$
 V(u+c_{j+1})$ and $V(c_{j+2}-u).
$
If $B(u)>0$, both factors must be positive. Therefore, $u+c_{j+1}\geq0$ and $c_{j+2}-u\geq0$, which implies that 
\[
 \supp(B)\subseteq[-c_{j+1},c_{j+2}].
\]
Thus, the sequences $A(t)$, $B(u)$, and $A(-t)$ have finite support, possibly empty.

\smallskip\noindent\emph{Step 4. Apply convolution and multiply by $H(q)$.}
By \eqref{eq:oddk}, \eqref{eq:partitionh}, and the definitions in \eqref{eq:A}--\eqref{eq:H}, we can write each term in \eqref{eq:oddcoeffsum} as
\[
 \begin{aligned}
 c(\mfk(q,t))
 &=\prod_{i=1}^{n+1}V(k_i(q,t))
   \prod_{I\in\cL}W_{a_I}(h_I(\mfk(q,t)))\\
 &=\underbrace{\prod_{i=1}^{j}V(\eps_i t+c_i)
   \prod_{I\in\cL_t}W_{a_I}(r_I+s_I t)}_{A(t)}\\
 &\quad{}\cdot\underbrace{\prod_{i=j+1}^{n+1}V(\eps_i(t+q)+c_i)
   \prod_{I\in\cL_u}W_{a_I}(r_I+s_I(t+q))}_{B(t+q)}\\
 &\quad{}\cdot\underbrace{\prod_{I\in\cL_0}W_{a_I}(r_I)
   \prod_{I\in\cL_q}W_{a_I}(r_I+s_I q)}_{H(q)}\\
 &=A(t)B(t+q)H(q).
 \end{aligned}
\]
This identity holds for all integer $q,t$, including when the coefficient is zero. Since $H(q)$ does not depend on $t$, we can take it outside the sum and obtain
\begin{equation}\label{eq:maincorrelation}
 C_q=H(q)\sum_{t\in\ZZ}A(t)B(t+q)
    =H(q)(\check A*B)_q,\quad \check A(t)=A(-t).
\end{equation}
By Lemma~\ref{lem:closures}, the reversed sequence $A(-t)$ is log-concave with finite interval support. Thus, by Proposition~\ref{prop:convolution}, the convolution sequence $\{(\check A*B)_q\}_{q\in\ZZ}$ has the same properties. Since $H(q)$ is log-concave with interval support, \eqref{eq:maincorrelation} and Lemma~\ref{lem:closures} show that $\{C_q\}_{q\in\ZZ}$ is also log-concave with interval support. Moreover, \eqref{eq:maincorrelation} implies $C_q=0$ whenever $(\check A*B)_q=0$, so this sequence has finite support regardless of the support of $H(q)$. This completes Case 3.

We have therefore proved that every coordinate slice of $C$ is log-concave with interval support. By \eqref{eq:shiftC}, the same property holds for $M$. Since each slice has finite support, unimodality follows from Lemma~\ref{lem:ratio}, which completes the proof.
\end{proof}

\section{Examples in types \texorpdfstring{$A_4$}{A4} and \texorpdfstring{$A_5$}{A5}}\label{sec:examples}
In this section, we illustrate the two main arguments in Section~\ref{sec:mainproof}. Indeed, log-concavity for cluster monomials of type $A_2$ \cite[Theorem~5.3]{CHS26}, and both log-concavity and unimodality in type $A_3$ \cite[Theorem~5.1]{Che26}, were proved using explicit Laurent expansions, Gauss hypergeometric functions and Jacobi polynomials. The examples below demonstrate how our laminar coefficient formula and fiber analysis provide a uniform approach in higher rank. In type $A_4$, we compute Laurent coefficients using the unique solution of the exponent equations with fixed total degree. In type $A_5$, we study a slice in the $x_2$ direction and write its coefficients in the form $H(q)(\check A*B)_q$. In each example, the monomial belongs to the cluster shown in Figure~\ref{fig:example-triangulations}, while the initial cluster is the fan centered at vertex $0$.
\begin{figure}[htbp]
\centering
\begin{tikzpicture}[line cap=round,line join=round,
  arc/.style={blue!70!black,thick},
  exponent/.style={font=\small,fill=white,inner sep=1pt,text=blue!70!black}]
  \begin{scope}
    \foreach \k in {0,...,6}{
      \coordinate (a\k) at ({180-360*(\k-1)/7}:1.65cm);
      \node[font=\small] at ({180-360*(\k-1)/7}:1.94cm) {$\k$};
    }
    \draw[thick] (a0)--(a1)--(a2)--(a3)--(a4)--(a5)--(a6)--cycle;
    \foreach \k/\lab in {3/t_3,4/t_4,5/t_5,6/t_6}
      \draw[arc] (a1)--node[exponent,pos=.58] {$\lab$}(a\k);
    \node at (0,-2.35) {Type $A_4$};
  \end{scope}
  \begin{scope}[xshift=6.2cm]
    \foreach \k in {0,...,7}{
      \coordinate (b\k) at ({180-360*(\k-1)/8}:1.65cm);
      \node[font=\small] at ({180-360*(\k-1)/8}:1.94cm) {$\k$};
    }
    \draw[thick] (b0)--(b1)--(b2)--(b3)--(b4)--(b5)--(b6)--(b7)--cycle;
    \foreach \k/\lab in {4/t_4,5/t_5,6/t_6,7/t_7}
      \draw[arc] (b1)--node[exponent,pos=.6] {$\lab$}(b\k);
    \draw[arc] (b2)--node[exponent,pos=.5] {$t_3$}(b4);
    \node at (0,-2.35) {Type $A_5$};
  \end{scope}
\end{tikzpicture}
\caption{The triangulations defining the clusters used in the examples. Each blue diagonal is labeled by its nonnegative exponent.}
\label{fig:example-triangulations}
\end{figure}

\subsection{Coefficient formula in type \texorpdfstring{$A_4$}{A4}}
We first make Case 1 of the main proof explicit. We take $n=4$ and choose the fan initial triangulation of the heptagon
\[
 \cT_0^{(4)}=\{(0,2),(0,3),(0,4),(0,5)\},
 \quad x_i=p_{0,i+1}\quad(i\in[4]).
\]
We take our monomial from the cluster corresponding to
\[
 \cT^{(4)}=\{(1,3),(1,4),(1,5),(1,6)\},
\]
which is shown on the left of Figure~\ref{fig:example-triangulations}. For $t_3,t_4,t_5,t_6\in\NN$, we set
\[
 M=p_{16}^{t_6}p_{15}^{t_5}p_{14}^{t_4}p_{13}^{t_3},
 \quad d=t_3+t_4+t_5+t_6.
\]
The four diagonals determine the laminar family
$
 \mcL=\{[1,2],[1,3],[1,4],[1,5]\},
$
whose members form a chain under inclusion. Then, by Proposition~\ref{prop:reduction}, the corresponding laminar product is
\begin{equation}\label{eq:exampleP}
 P=(z_1+\cdots+z_5)^{t_6}(z_1+\cdots+z_4)^{t_5}
   (z_1+z_2+z_3)^{t_4}(z_1+z_2)^{t_3}.
\end{equation}
Since $u_1=u_6=1$, $u_3=x_2$, $u_4=x_3$, and $u_5=x_4$, by Lemma~\ref{lem:fanexp}, we have 
\[
 M=\mfx^{\mfeta}\Phi(P),\quad \mfeta=(0,t_3,t_4,t_5).
\]

We first compute the coefficients of $P$. For $\mfk=(k_1,\dots,k_5)\in\NN^5$, we write
\[
 r=k_1+k_2,\quad s=k_1+k_2+k_3,\quad v=k_1+k_2+k_3+k_4.
\]
The values of $D_I$ on $[1,2]$, $[1,3]$, $[1,4]$, $[1,5]$ are $t_3$, $t_3+t_4$, $t_3+t_4+t_5$ and $d$, respectively. Hence, by \eqref{eq:Omega}, $\mfk$ is admissible if and only if
\[
 \wt(\mfk)=d,\quad r\geq t_3,\quad s\geq t_3+t_4,\quad v\geq t_3+t_4+t_5.
\]
Under these conditions, Proposition~\ref{prop:coeff} implies that 
\begin{equation}\label{eq:examplecoeff}
 [\mfz^{\mfk}]P
 =\frac{t_6!}{\prod_{i=1}^5k_i!}
   \frac{(v-t_3-t_4)!}{(v-t_3-t_4-t_5)!}
   \frac{(s-t_3)!}{(s-t_3-t_4)!}\frac{r!}{(r-t_3)!}.
\end{equation}
If $\mfk$ is not admissible, the coefficient is zero. Note that the admissibility conditions ensure that every factorial argument in \eqref{eq:examplecoeff} is nonnegative.

We next determine which coefficient of $P$ contributes to a given Laurent coefficient of $M$. For $\mfb=(b_1,\ldots,b_4)\in\ZZ^4$, we solve $L(\mfk)=\mfb$ together with $\wt(\mfk)=d$. By \eqref{eq:paritydegree}, we have $k_1=d-b_2-b_4$. Applying the recursion $k_{i+1}=b_i-k_i$, we obtain the unique integer solution
\begin{equation}\label{eq:rank4inverse}
 \begin{aligned}
 k_1&=d-b_2-b_4,&
 k_2&=b_1+b_2+b_4-d,\\
 k_3&=d-b_1-b_4,&
 k_4&=b_1+b_3+b_4-d,\\
 k_5&=d-b_1-b_3.&&
 \end{aligned}
\end{equation}
To compute $[\mfx^{\mfalpha}]M$, we set $\mfb=\mfeta-\mfalpha$ by \eqref{eq:shiftC} and substitute it into \eqref{eq:rank4inverse}. If all five coordinates are nonnegative and satisfy the admissibility inequalities above, the desired coefficient is given by \eqref{eq:examplecoeff}. Otherwise, it is zero.
Fix $j\in[4]$ and vary $\alpha_j=q$ while keeping the other exponents fixed. Substituting $\mfb=\mfeta-\mfalpha$ into \eqref{eq:rank4inverse}, we can write $\mfk(q)=\mfk^0+q\mfv$ for fixed $\mfk^0,\mfv\in\ZZ^5$. The preceding argument identifies the coefficient slice of $M$ with
\[
 \{[\mfz^{\mfk^0+q\mfv}]P\}_{q\in\ZZ},
\]
including the zero terms corresponding to nonadmissible vectors. By Proposition~\ref{prop:lines}, this slice is log-concave and has interval support. Its finite support then implies unimodality by Lemma~\ref{lem:ratio}.

\subsection{A coefficient slice in type \texorpdfstring{$A_5$}{A5}}
We next make the convolution argument in Case 3 explicit. We take $n=5$ and study the $x_2$ direction, so the rank is odd and the coordinate index is even. We choose the fan initial triangulation of the octagon
\[
 \cT_0^{(5)}=\{(0,2),(0,3),(0,4),(0,5),(0,6)\},
 \quad x_i=p_{0,i+1}\quad(i\in[5]).
\]
We take our monomial from the cluster corresponding to
\[
 \cT^{(5)}=\{(1,4),(1,5),(1,6),(1,7),(2,4)\}.
\]
This is the triangulation shown on the right of Figure~\ref{fig:example-triangulations}. 
For $t_3,t_4,t_5,t_6,t_7\in\NN$, we set
\[
 M=p_{17}^{t_7}p_{16}^{t_6}p_{15}^{t_5}p_{14}^{t_4}p_{24}^{t_3},
 \quad d=t_3+t_4+t_5+t_6+t_7.
\]
The corresponding laminar product is
\[
 P=(z_1+\cdots+z_6)^{t_7}(z_1+\cdots+z_5)^{t_6}
   (z_1+\cdots+z_4)^{t_5}(z_1+z_2+z_3)^{t_4}(z_2+z_3)^{t_3}.
\]
The associated laminar family is $\mcL=\{[2,3],[1,3],[1,4],[1,5],[1,6]\}$, whose members form the chain
\[
 [2,3]\subset[1,3]\subset[1,4]\subset[1,5]\subset[1,6].
\]
By Lemma~\ref{lem:fanexp}, we obtain
\[
 M=\mfx^{\mfeta}\Phi(P),\quad \mfeta=(t_3,0,t_3+t_4,t_5,t_6).
\]

We now fix integers $b_1,b_3,b_4,b_5$ and define the coefficient slice as in \eqref{eq:bq}, with $j=2$,
\[
 \mfb(q)=(b_1,q,b_3,b_4,b_5),\quad
 C_q=[\mfx^{-\mfb(q)}]\Phi(P)
     =[\mfx^{\mfeta-\mfb(q)}]M.
\]
For every solution of $L(\mfk)=\mfb(q)$, according to \eqref{eq:paritydegree}, we have $\wt(\mfk)=b_1+b_3+b_5$. Since $P$ is homogeneous of degree $d$, the slice is zero unless
$
 b_1+b_3+b_5=d.
$
We therefore assume that this equality holds. We set $t=k_1$ and $u=t+q$. By the recursion $k_{i+1}=b_i(q)-k_i$, we obtain all integer solutions in the form
\[
 \begin{aligned}
 k_1&=t,&k_2&=b_1-t,\\
 k_3&=u-b_1,&k_4&=b_1+b_3-u,\\
 k_5&=u+b_4-b_3-b_1,&
 k_6&=d-b_4-u.
 \end{aligned}
\]
We denote this solution by $\mfk(q,t)$, which satisfies $\wt(\mfk(q,t))=d$. Using these coordinates and $h_I(\mfk)=\wt_I(\mfk)-D_I$, we obtain
\[
 \begin{array}{c|c|c}
 I&D_I&h_I(\mfk(q,t))\\ \hline
 {[1,6]}&d&0\\
 {[1,5]}&t_6+t_5+t_4+t_3&u+b_4-t_6-t_5-t_4-t_3\\
 {[1,4]}&t_5+t_4+t_3&b_1+b_3-t_5-t_4-t_3\\
 {[1,3]}&t_4+t_3&u-t_4-t_3\\
 {[2,3]}&t_3&q-t_3
 \end{array}
\]
Thus, the interval groups from Step 2 of the main proof are
\[
 \cL_0=\{[1,6],[1,4]\},\quad \cL_t=\varnothing,\quad
 \cL_u=\{[1,5],[1,3]\},\quad \cL_q=\{[2,3]\}.
\]
By \eqref{eq:coeffzero}, we can therefore group the factors as in \eqref{eq:A}--\eqref{eq:H} to obtain
\[
 [\mfz^{\mfk(q,t)}]P=A(t)B(t+q)H(q),
\]
where
\begin{align*}
 A(t)&=V(t)V(b_1-t),\\
 B(u)&=V(u-b_1)V(b_1+b_3-u)
       V(u+b_4-b_3-b_1)V(d-b_4-u)\\
     &\quad{}\cdot W_{t_4}(u-t_4-t_3)W_{t_6}(u+b_4-t_6-t_5-t_4-t_3),\\
 H(q)&=t_7!\,W_{t_5}(b_1+b_3-t_5-t_4-t_3)W_{t_3}(q-t_3).
\end{align*}
Here, the constant factor $t_7!=W_{t_7}(0)$ corresponds to $I=[1,6]$, for which $h_I(\mfk(q,t))=0$.
From the factors $V(t)V(b_1-t)$ in $A(t)$ and $V(u-b_1)V(b_1+b_3-u)$ in $B(u)$, we obtain
\[
 \supp(A)\subseteq[0,b_1],\quad
 \supp(B)\subseteq[b_1,b_1+b_3].
\]
Note that $j=2$, so $b_1=-c_3=-c_{j+1}$ and $b_1+b_3=c_4=c_{j+2}$, as the support bound from Step 3 in the proof of Theorem~\ref{thm:main}.
Thus, $A(t)$ and $B(u)$ have finite support. By \eqref{eq:fibersum}, we have
\[
 C_q=H(q)\sum_{t\in\ZZ}A(t)B(t+q)
    =H(q)(\check A*B)_q,
 \quad \check A(t)=A(-t).
\]
As the Case 3 in the proof of Theorem~\ref{thm:main}, we conclude that the corresponding $x_2$ slice of $M$ is log-concave and unimodal, with interval support.

\section{Initial seeds and seed independence}\label{sec:scope}
The fan assumption enters our proof through the expansion in Lemma~\ref{lem:fanexp}, the laminar structure of compatible diagonals, and the exponent map \eqref{eq:L}. To extend the result to arbitrary initial seeds, one must understand how the coefficient inequalities behave under mutation.

\subsection{Transport of expansions and mutation}
We first consider Laurent expansions under isomorphisms.
\begin{proposition}\label{prop:transport}
Let $\varphi:\cA\to\cA'$ be a $\ZZ$-algebra isomorphism mapping cluster monomials bijectively onto cluster monomials. Suppose that the chosen initial clusters $\mfx$ and $\mfx'$ satisfy $\varphi(x_i)=x'_{\sigma(i)}$ for some $\sigma\in S_n$. Then, $\varphi$ preserves log-concavity, unimodality and the absence of internal zeros with respect to these initial clusters.
\end{proposition}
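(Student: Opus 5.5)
The plan is to show that $\varphi$ transports Laurent expansions coefficientwise, up to a permutation of the variables, and then to invoke Lemma~\ref{lem:monomialshift}. Let $M\in\cA$ be a cluster monomial and let $M'=\varphi(M)$, which is a cluster monomial of $\cA'$ by hypothesis. By the Laurent phenomenon we can write
\[
 M=\sum_{\mfalpha\in\ZZ^n}c_{\mfalpha}\mfx^{\mfalpha}
\]
as an identity in the ambient field $\mathcal F$, with finitely many nonzero $c_{\mfalpha}\in\ZZ_{\geq0}$.

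The first step is to extend $\varphi$ to the Laurent expansion. Since $\cA\subseteq\ZZ[x_1^{\pm1},\ldots,x_n^{\pm1}]$ and $x_1,\ldots,x_n$ are algebraically independent, $\varphi$ should extend to the fraction fields. It is injective, so it extends to a field isomorphism $\mathcal F\to\mathcal F'$ sending $x_i$ to $x'_{\sigma(i)}$. This extension restricts to a ring isomorphism of the Laurent polynomial rings, and it sends $\mfx^{\mfalpha}$ to $\mfx'^{\sigma\cdot\mfalpha}$, where $(\sigma\cdot\mfalpha)_{\sigma(i)}=\alpha_i$.

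Applying this extension to the expansion of $M$ gives
\[
 M'=\sum_{\mfalpha}c_{\mfalpha}\,\mfx'^{\sigma\cdot\mfalpha}.
\]
Since Laurent monomials are linearly independent, this is the Laurent expansion of $M'$ in $\mfx'$. Its coefficient array is therefore the array of $M$ with the coordinate directions permuted by $\sigma$. In particular, each coordinate slice of $M'$ in direction $\sigma(j)$ coincides with a coordinate slice of $M$ in direction $j$.

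The conclusion then follows from Lemma~\ref{lem:monomialshift}, which states that a permutation of the variables preserves log-concavity, unimodality and the absence of internal zeros. Applying the same argument to $\varphi^{-1}$, which satisfies $\varphi^{-1}(x'_k)=x_{\sigma^{-1}(k)}$, gives the converse direction. Hence $M$ has each property if and only if $\varphi(M)$ does.

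The main obstacle is justifying the extension in the first step: that $\varphi$, which is only given on $\cA$, determines the substitution on Laurent expansions. I would handle this through the fraction field. $\cA$ is a domain whose fraction field is $\mathcal F=\QQ(x_1,\ldots,x_n)$, and $\varphi$ is determined on $\mathcal F$ by its values on the $x_i$. The expansion identity is an identity in $\mathcal F$ with the $x_i^{-1}$ lying in $\mathcal F$, so applying the field isomorphism is legitimate.
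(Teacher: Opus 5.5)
Your proposal is correct and follows the paper's proof. You extend $\varphi$ to the fraction fields, apply it to the Laurent expansion of $M$ to see that the coefficient array of $\varphi(M)$ in $\mfx'$ is that of $M$ with its coordinate directions permuted by $\sigma$, and conclude because such a permutation preserves all three properties. Your justification of the field extension and the converse via $\varphi^{-1}$ are valid additions that the paper leaves implicit.
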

\begin{proof}
The isomorphism extends to the fraction fields. Applying this isomorphism to the Laurent expansion $M=\sum_{\mfalpha}c_{\mfalpha}\mfx^{\mfalpha}$, we obtain
\[
 \varphi(M)=\sum_{\mfalpha}c_{\mfalpha}\prod_{i=1}^n(x'_{\sigma(i)})^{\alpha_i}.
\]
Thus, the coefficient arrays agree up to a permutation of the coordinate directions, which preserves all three properties by Definition~\ref{def:multivariate}.
\end{proof}

To study seed independence, we fix a cluster monomial $M$ and compare its Laurent expansions in different initial clusters of the same algebra. We pass between these expansions by mutations. In Proposition~\ref{prop:transport}, we apply the isomorphism $\varphi$ to the Laurent expansion of $M$, sending $M$ to $\varphi(M)$ and each $x_i$ to $x'_{\sigma(i)}$. This invariance does not by itself establish the seed independence conjectured in \cite[Conjecture~5.11(2)]{Che26}.

The approach used in Proposition~\ref{prop:reduction} relies on the fan assumption. For a simple example, consider the non-fan initial triangulation $\{(0,2),(2,4),(0,4)\}$ of type $A_3$, with initial cluster $\mfx=(x_1,x_2,x_3)=(p_{02},p_{24},p_{04})$. The Ptolemy relations give
\[
 p_{03}=\frac{x_1+x_3}{x_2},\quad
 p_{13}=\frac{1+p_{03}}{x_1}=\frac{x_1+x_2+x_3}{x_1x_2}.
\]
The cluster variable $p_{13}$ cannot have the form in Proposition~\ref{prop:reduction} with the same substitution $\Phi(z_i)=1/(x_{i-1}x_i)$, where $x_0=x_4=1$. Indeed, setting $x_1=x_2=x_3=1$ would give $\prod_{I\in\cL}|I|^{a_I}=3$. Hence, exactly one interval of length $3$ has exponent $1$, and all other intervals with positive exponents are singletons. Setting $x_1=x_2=x_3=t$ then gives $\mfx^{\mfeta}\Phi(P)=t^\ell(t+2)$ for some $\ell\in\ZZ$, which cannot equal $p_{13}=3/t$. Thus, the expression in Proposition~\ref{prop:reduction} does not extend directly to arbitrary initial seeds, and a different argument for this case is needed; see also \cite{Che26}. 

\subsection{Seed independence}
We now restate the seed independence conjecture in the form relevant to our result.
\begin{conjecture}[{\cite[Conjecture~5.11(2)]{Che26}}]\label{conj:seed-independence}
Let $n\geq4$. For every cluster monomial $M$ of type $A_n$, the log-concavity and unimodality of its Laurent expansion are independent of the choice of initial seed.
\end{conjecture}
Together with Theorem~\ref{thm:main}, this conjecture would prove part~(1) of Conjecture~\ref{conj:cluster-monomials} for every initial seed. It would also imply the absence of internal zeros, since every nonnegative unimodal sequence has interval support.

\begin{remark}[Canonical bases and positivity]\label{rem:canonical-positivity}
The duality conjectures of Fock-Goncharov \cite[Section~4]{FG09} relate canonical functions to tropical points of the Langlands dual space, independently of a cluster chart. For cluster monomials on the cluster $\mathcal A$-variety, Conjecture~\ref{conj:seed-independence} asks for inequalities among Laurent coefficients in every chart, strengthening the requirement of nonnegative coefficients. Our theorem establishes these inequalities in fan charts.
\end{remark}

\begin{remark}[Supports and coefficient inequalities]\label{rem:tropical-coefficients}
Shen's work \cite[Theorem~1.1]{She14} describes the support of products in a canonical basis, whereas our conjecture concerns inequalities among Laurent coefficients; see Remark~\ref{rem:shen}. Convexity of the support alone does not imply log-concavity of the coefficients. The work of Goncharov-Shen \cite{GS15} motivates asking whether our coefficient formula admits a geometric interpretation that explains these inequalities under flips.
\end{remark}

\section*{Acknowledgements}
We would like to sincerely thank Yilin Wu for helpful discussions and valuable suggestions.
Z. Chen is also grateful to Nagoya University for its hospitality during his two-year visit. Z. Chen is supported by National Natural Science Foundation of China (Grant No. 124B2003) and China Scholarship Council (Grant No. 202406340022). Z. Sun is supported by the National Natural Science Foundation of China
(Grant No. 12471068).

%\newpage
\enlargethispage{\baselineskip}

\end{document}